\newcommand{\R}{\ensuremath{\mathbb{R}}}
\newcommand{\N}{\ensuremath{\mathbb{N}}}
\newcommand{\dis}{\displaystyle}
\newcommand{\vs}{\vspace{0,5cm}}
\newtheorem {theorem} {Theorem} 
\newtheorem {proposition} [theorem] {Proposition}
\newtheorem {lemma} [theorem] {Lemma}
\newtheorem {definition} [theorem] {Definition}
\newtheorem {remark} {Remark}
\begin{document}

\title[Asymptotically stable cusp-fold singularity in 3D] {An asymptotically stable cusp-fold singularity in 3D piecewise smooth vector fields.}

\author[T. de Carvalho, M.A. Teixeira and D.J. Tonon]
{Tiago de Carvalho$^1$, Marco A. Teixeira$^2$\\ and Durval J. Tonon$^3$}

%

\address{$^1$ FC--UNESP, CEP 17033--360,
Bauru, S\~ao Paulo, Brazil}

\address{$^2$ IMECC--UNICAMP, CEP 13081--970, Campinas,
S\~ao Paulo, Brazil}

\address{$^3$ Universidade Federal de Goi\'{a}s, IME, CEP 74001-970, Caixa
Postal 131, Goi\^{a}nia, Goi\'{a}s, Brazil.}


\email{tcarvalho@fc.unesp.br}

\email{teixeira@ime.unicamp.br}

\email{djtonon@mat.ufg.br}

\subjclass{Primary 34A36, 34C23, 34D30, 37G05, 37G10}

\keywords{piecewise smooth vector field, cusp-fold
singularity, asymptotical stability, basin of attraction.}

\dedicatory{} \maketitle



\begin{abstract}

This paper is concerned with the analysis of
a typical singularity of piecewise smooth vector fields on $\R^3$ composed by two zones. In our object of study, the cusp-fold
singularity, we consider the simultaneous occurrence of a cusp singularity for one vector field and a fold singularity for the other one. We exhibit a normal form that presents one of the most important property searched for in piecewise smooth vector fields: the asymptotical stability.
\end{abstract}

\section{Introduction}\label{secao introducao}

In this paper we study piecewise smooth vector fields
(PSVFs for short) $Z$ on $\R^3$. Our goal is to describe the
local dynamics around typical singularities of $Z$
consisting of two smooth vector fields $X, Y$ in $\R^3$ such
that on one side of a smooth surface $\Sigma = \{ z = 0 \}$ we take
$Z = X$ and on the other side $Z = Y$.

PSVFs  are widely used in Electrical and Electronic
Engineering, Physics, Economics, among other areas. In our approach Filippov's convention (see \cite{Fi}) is considered. So, the vector field is discontinuous across the \textbf{switching manifold} $\Sigma$ and it is possible for its trajectories to be
confined onto the switching manifold itself. The occurrence of such
behavior, known as sliding motion, has been reported in a
wide range of applications (see for instance \cite{diBernardo-livro,diBernardo-electrical-systems,Lamb-Makarenkov} and references therein).

The main tool treated here concerns the non transversal contact between a general smooth
vector field and the boundary $\Sigma$ of a manifold. Such points are distinguished
singularities $-$ important objects to be analyzed when one studies Filippov
systems (see \cite{Eu-fold-cusp,Eu-fold-sela,Marcel,Kuznetsov} for a planar analysis on this subject). In the $3$-dimensional case, there are two important distinguished generic singularities: the points where this contact is either quadratic or cubic, which are called \textbf{fold points} and \textbf{cusp points} respectively. As it is fairly known, from a generic cusp point emanate two branches of fold points (see Figure \ref{fig cusp-fold e two-fold inicio}), one of such branches formed by \textbf{visible fold points}, where the trajectories tangent are visible and one of such branches formed by \textbf{invisible fold points}, where the trajectories tangent are not visible. Moreover, it is possible for a point $p \in \Sigma$ be a tangency point for both $X$ and $Y$. When $p$ is a fold point of both $X$ and $Y$ we say
that $p$ is a \textbf{two-fold singularity} and when $p$ is a cusp point for $X$ and a fold point for $Y$ we say that $p$ is a \textbf{cusp-fold singularity} (see Figure \ref{fig cusp-fold e two-fold inicio} below).
\begin{figure}[!h]
\begin{center} \epsfxsize=12cm
\epsfbox{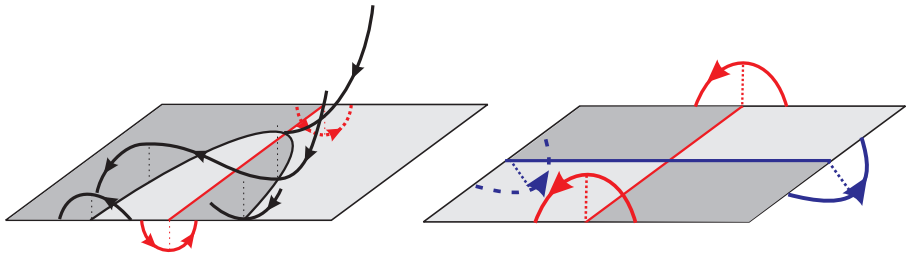}
\caption{\small{On the left it appears a cusp-fold singularity and on the right a two-fold singularity.}} \label{fig cusp-fold e two-fold inicio}
\end{center}
\end{figure}
In \cite{Jeffrey-colombo, Jeffrey-colombo-2011, J-T-T1, J-T-T2} two-fold singularities are studied and their normal forms and
phase portraits are exhibited and in \cite{diBernardo-electrical-systems, Jeffrey-T-sing} are exhibited applications of such theory in electrical and control systems, respectively. This singularity is particularly relevant because in its neighborhood  some of the key
features of a piecewise smooth system are present: orbits that cross
 $\Sigma$, those that slide along it according to
Filippov's convention, among others.

In this paper we analyze the  bifurcation diagram and the asymptotical stability of the following family presenting a fold-cusp singularity:
\begin{equation}\label{eq forma normal cusp-fold inicial com lambda}
 Z_{\lambda}(x,y,z) = \left\{
      \begin{array}{ll}
        X^{\lambda}_{a,b} =  \left(
              \begin{array}{c}
              a \\
              \lambda \\
              b (y+x^2)
\end{array}
      \right)
 & \hbox{if $z \geq 0$,} \\
       Y_{c,d} = \left(
              \begin{array}{c}
               c \\
               d \\
               x
\end{array}
      \right)& \hbox{if $z \leq 0$,}
      \end{array}
    \right.
\end{equation}with $a,b,c,d, \lambda\in \R, b\cdot c\neq 0$ and $\lambda$ is arbitrarily small. Moreover, we observe that the topological dynamic of this still poor studied object is even more
sophisticated than that one exhibited by the two-fold singularity.
In fact, by means of the variation of the parameter $\lambda$ occurs the birth of two-fold
singularities approaching the cusp-fold singularity. 

The paper is organized as follows: In Section \ref{secao teoria basica} we formalize some basic concepts on PSVFs and the concept of first return map in this scenario is formalized. In Section \ref{secao resultados principais} the problem is described, the main results are stated and we pave the way in order to prove the main results in Section \ref{main results}.



\section{Basic Theory about PSVF's}\label{secao teoria basica}

Let $K = \{ (x,y,z) \in \R^3 \, | \, x^2 + y^2 + z^2 < \delta  \}$,
where $\delta>0$ is arbitrarily small. Consider $\Sigma = \{ (x,y,z)
\in K \, | \, z=0 \}$. Clearly the switching manifold
$\Sigma$ is the separating boundary of the regions
$\Sigma_+=\{(x,y,z) \in K \, | \, z \geq 0\}$ and $\Sigma_-=\{(x,y,z) \in
K \, | \, z \leq 0\}$.

Designate by $\chi^r$ the space of $C^r$-vector fields on $K$
endowed with the $C^r$-topology with $r=\infty$ or $r\geq 1$ large
enough for our purposes. Call \textbf{$\Omega^r$} the
space of vector fields $Z: K \rightarrow \R ^{3}$ such that
\begin{equation}\label{eq Z}
 Z(x,y,z)=\left\{\begin{array}{l} X(x,y,z),\quad $for$ \quad (x,y,z) \in
\Sigma_+,\\ Y(x,y,z),\quad $for$ \quad (x,y,z) \in \Sigma_-,
\end{array}\right.
\end{equation}
where $X=(X_1,X_2,X_3)$ and $Y = (Y_1,Y_2,Y_3)$ are in $\chi^r.$ We
may consider $\Omega^r = \chi^r \times \chi^r$ endowed with the
product topology  and denote any element in $\Omega^r$ by $Z=(X,Y),$
which we will accept to be multivalued in points of $\Sigma$. The basic results of
differential equations, in this context, were stated by Filippov in
\cite{Fi}. Related theories can be found in \cite{diBernardo-livro, Marco-enciclopedia} and references therein. On $\Sigma$ we generically distinguish the following regions:
\begin{itemize}
\item \textbf{Crossing Region:} $\Sigma^c=\{ p \in \Sigma \, | \, X_3(p).Y_3(p)> 0 \}$.
Moreover, we denote $\Sigma^{c+}= \{ p \in \Sigma \, | \,
X_3(p)>0,Y_3(p)>0 \}$ and $\Sigma^{c-} = \{ p \in \Sigma \, | \,
X_3(p)<0,Y_3(p)<0 \}$.

\item  \textbf{Sliding Region:} $\Sigma^{s}= \{ p \in \Sigma \, | \, X_3(p)<0,
Y_3(p)>0 \}$.

\item \textbf{Escaping
Region:} $\Sigma^{e}= \{ p \in \Sigma \, | \, X_3(p)>0
,Y_3(p)<0\}$.
\end{itemize}

When $q \in \Sigma^s$, following the Filippov's convention, the \textbf{sliding vector field}
associated to $Z\in \Omega^r$ is the vector field  $\widehat{Z}^s$ tangent to
$\Sigma^s$ expressed in coordinates as
\begin{equation}\label{eq campo filippov}\widehat{Z}^s(q)= \frac{1}{(Y_3 - X_3)(q)} ((X_1 Y_3 - Y_1 X_3)(q),(X_2 Y_3 - Y_2 X_3)(q),0).\end{equation}
Associated to \eqref{eq campo filippov} there exists the planar \textbf{normalized sliding vector field}
\begin{equation}\label{equacao campo normalizado}Z^{s}(q)=((X_1 Y_3 - Y_1 X_3)(q),(X_2 Y_3 - Y_2 X_3)(q)).
\end{equation}Observe that $\widehat{Z}^{s}$ and $Z^{s}$ are topologically
equivalent in $\Sigma^s$ and $Z^s$ can be C$^r$-extended to the closure $\overline{\Sigma^s}$ of $\Sigma^s$.\\

\begin{lemma}\label{lema equilibrio sliding}
Given $Z=(X,Y) \in \Omega^r$ if $q \in \Sigma$ is a two-fold or a cusp-fold singularity of $Z$, then $q$ is an equilibrium point of the normalized sliding vector field given in \eqref{equacao campo normalizado}.
\end{lemma}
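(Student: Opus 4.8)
The plan is to reduce everything to a single observation: both fold points and cusp points are, before anything else, points of tangency between the vector field and $\Sigma$, and at such a point the third (normal) component of the field vanishes. Since $\Sigma = \{z=0\}$, a smooth field $W = (W_1,W_2,W_3)$ is tangent to $\Sigma$ at $q$ precisely when $W_3(q)=0$ (this is the condition $Wh(q)=0$ for $h(x,y,z)=z$). A fold point is a quadratic tangency, characterized by $W_3(q)=0$ together with $W(W_3)(q)\neq 0$, and a cusp point is a cubic tangency, characterized by $W_3(q)=0$, $W(W_3)(q)=0$ and $W^2(W_3)(q)\neq 0$. In either case the defining contact conditions force $W_3(q)=0$.

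First I would apply this remark in the two situations of the statement. If $q$ is a two-fold singularity, then $q$ is a fold point for both $X$ and $Y$, so $X_3(q)=0$ and $Y_3(q)=0$. If $q$ is a cusp-fold singularity, then $q$ is a cusp point for $X$ and a fold point for $Y$; the cusp condition gives $X_3(q)=0$ and the fold condition gives $Y_3(q)=0$. Thus, in both cases, one arrives at the same common conclusion $X_3(q)=Y_3(q)=0$.

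Finally I would substitute into the expression \eqref{equacao campo normalizado} for the normalized sliding vector field,
\[
Z^{s}(q)=\big((X_1 Y_3 - Y_1 X_3)(q),\,(X_2 Y_3 - Y_2 X_3)(q)\big).
\]
Every term on the right-hand side carries a factor $X_3(q)$ or $Y_3(q)$, both of which vanish, so $Z^s(q)=(0,0)$; that is, $q$ is an equilibrium of the normalized sliding field.

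There is no genuine obstacle here: the entire content lies in recognizing that the definitions of \emph{fold} and \emph{cusp} both subsume the tangency condition $W_3(q)=0$, after which the conclusion is an immediate algebraic substitution. The only point deserving a word of care is that $Z^{s}$ (and $\widehat{Z}^{s}$) is a priori defined only on the open sliding region $\Sigma^s$; however, as noted just after \eqref{equacao campo normalizado}, $Z^s$ extends $C^r$ to $\overline{\Sigma^s}$, and in any case the polynomial expression above is defined and continuous on all of $\Sigma$, so evaluating it at the tangency point $q$ is legitimate.
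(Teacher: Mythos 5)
Your proposal is correct and follows exactly the paper's argument: both a fold and a cusp are tangency points, so $X_3(q)=Y_3(q)=0$, and substitution into \eqref{equacao campo normalizado} gives $Z^s(q)=(0,0)$. The extra remarks on the contact conditions and on extending $Z^s$ to $\overline{\Sigma^s}$ are sound but not needed beyond what the paper already records.
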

\begin{proof}
It is straightforward since both $X$ and $Y$ are tangent to $\Sigma$ at $q$. So, $X_3(q)=Y_3(q)=0$ and $Z^{s}(q)=(0,0)$.
\end{proof}

In fact, the previous lemma remains true when the trajectories of both $X$ and $Y$ have a non transversal contact point at $q$ regardless the order of such contact.

The points $q \in \Sigma$ such that $Z^s(q)=0$ are called \textbf{pseudo equilibria of $\mathbf{Z}$} and the points $p \in \Sigma$ such that $X_3(p).Y_3(p) =0$ are called \textbf{tangential singularities of $\mathbf{Z}$}  (i.e., the trajectory through $p$ is tangent to $\Sigma$). Note that two-fold and cusp-fold singularities are both pseudo equilibria and tangential singularities.

\vspace{.5cm}
\noindent \textbf{Notations:}
\begin{itemize} \item We denote the flow of a vector field $W \in \chi^r$ by $\phi_{W}(t,p)$ where $t \in I$
with $I=I(p,W)\subset \R$ being an interval depending on $p \in K$
and $W$.

\item Given a vector field $W$ defined in $A \subset K$, we denote the \textbf{backward trajectory} $\phi^{-}_{W}(A)$ (respectively, \textbf{forward trajectory} $\phi^{+}_{W}(A)$) the set of all negative (respectively,
positive)  orbits of $W$ through points of $A$.

\item We denote the boundary of an arbitrary set $A \subset K$ by $\partial A$.
    \end{itemize}

Following \cite{Marcel}, page 1971, we consider the definition:

\begin{definition}\label{definicao trajetorias}
The forward local trajectory $\phi^{+}_{Z}(t,p)$ of a PSVF given by \eqref{eq
Z} through $p \in \Sigma$ is defined as follows:
\begin{itemize}
\item[(i)] $\phi^{+}_{Z}(t,p)=\phi_{X}(t,p)$ (respectively, $\phi_{Z}(t,p)=\phi_{Y}(t,p)$) provided that $p \in \Sigma^{c+}$ (respectively, $p \in \Sigma^{c-}$).

\item[(ii)] $\phi^{+}_{Z}(t,p)=\phi_{Z^{s}}(t,p)$ provided that $p \in
\Sigma^s$.

\item[(iii)] splits in two orbits  $\phi^{+}_{Z}(t,p)=\phi_{X}(t,p)$
and $\phi^{+}_{Z}(t,p)=\phi_{Y}(t,p)$ provided that $p \in \Sigma^{e}$.

\item[(iv)] For $p \in \partial \Sigma^s \cup \partial \Sigma^e \cup
\partial \Sigma^c$ such that the definitions of forward trajectories  for points
in a full neighborhood of $p$ in $\Sigma$ can be extended to $p$
and coincide, the trajectory through $p$ is this trajectory.

\item[(v)] For any other point $\phi^{+}_{Z}(t,p)=p$ for all $t \in \R$.
\end{itemize}\end{definition}

\subsection{The fist return map}

The following construction is presented  in
\cite{Marco-enciclopedia}. Let $Z=(X,Y) \in \Omega^r$ such that $q$
is an invisible fold point of $X$. From Implicit Function
Theorem, for each $p \in \Sigma$ in a neighborhood $\mathcal{V}_{q}$
of $q$ there exists a unique $t(p) \in
(-\delta,\delta)$, a small interval,  such that $\phi_X(t,p)$  meets $\Sigma$ in
$\widetilde{p}=\phi_X(t(p),p)$. Define the
 map $\gamma_X:\mathcal{V}_{q} \cap \Sigma \rightarrow
\mathcal{V}_{q} \cap \Sigma$ by $\gamma_X(p)=\widetilde{p}$. This
map is a $C^{r}$-diffeomorphism and satisfies: $\gamma_{X}^{2}=Id$.
Analogously, when $\widetilde{q}$ is an invisible fold
point of $Y$  we define the  map
$\gamma_Y:\mathcal{V}_{\widetilde{q}} \cap \Sigma \rightarrow
\mathcal{V}_{\widetilde{q}} \cap \Sigma$ associated to $Y$ which
satisfies $\gamma_{Y}^{2}=Id$. Now we can give the following definition:

\begin{definition}\label{definicao aplicacao primeiro retorno}
Let $\mathcal{T} \subset \Sigma^{c}$ be an open region of $\Sigma$. The \textbf{First Return Map} $\varphi_Z:
\mathcal{T} \rightarrow \mathcal{T}$ is defined by the composition
$\varphi_Z=\gamma_Y \circ \gamma_X$ when $\gamma_X (\mathcal{T})
\subset \Sigma^{c}$ and $\gamma_X,\gamma_Y$
are well defined in $\mathcal{T}, \gamma_X (\mathcal{T})$ respectively.\end{definition}

Considering the PSVF given in \eqref{eq forma normal cusp-fold inicial com lambda}, we get the expression of the first return map
\begin{equation}
\varphi_{Z_{\lambda}}(x,y)=\left(\frac{  2 a x + \Delta_1}{4a}, y + \frac{d(2 a x + \Delta_1)}{2ac} + \frac{\lambda( - 6 a x - \Delta_1)}{4a^2}\right)
\label{aplicacao-primeiro-retorno}\end{equation}where $\Delta_1=3 \lambda - \sqrt{9 \lambda^2 + 36 a \lambda x - 12 a^2 (x^2 + 4 y)}$.

Note that we can extend $\varphi_{Z_{\lambda}}$ to the boundary of $SwR$. In this way, the unique fixed point of $\varphi_{Z_{\lambda}}$, in a neighborhood of origin, is the origin. Let
\[
\Delta_2=(ad)^2-adc\lambda.
\] When $\lambda\neq 0$, the eigenvalues of $D\varphi_{Z_{\lambda}}$ at origin are
\begin{equation}
\xi_{\pm}^{\lambda}=\frac{2ad-c\lambda\pm 2\sqrt{\Delta_2}}{c\lambda},
\label{expressao-autovalor-primeiro-retorno}\end{equation}the eigenvectors associated to $\xi_+^{\lambda}$ and $\xi_-^{\lambda}$ respectively, are
\[
v_{\pm}^{\lambda}=(\omega_{\pm}^{\lambda},1),
\]where $\omega_{\pm}^{\lambda}=\frac{ac}{ad\pm \sqrt{\Delta_2}}$ and
the eigenspaces associated to $\xi_{\pm}^{\lambda}$, respectively, are tangent to the straight lines
\begin{equation}
S_{\pm}^{\lambda}=\left\{(x,y,0)\in \Sigma|x=\dis\frac{ac}{ad\pm \sqrt{\Delta_2}} y\right\}.
\label{expressao-autoespaco-primeiro-retorno}\end{equation}


\section{Main Results}\label{secao resultados principais}

The main results of the paper are now stated. \\

\noindent\textbf{Theorem A.}
Let $Z_{\lambda}$ given by \eqref{eq forma normal cusp-fold inicial com lambda} presenting a fold-cusp singularity. If $a<0$, $b<0$, $c>0$, $d>0$ and $a+ bd>0$ then:
\begin{itemize}
\item $Z_{\lambda}$ is asymptotically stable when $\lambda \geq 0$ and

\item $Z_{\lambda}$ is not asymptotically stable when $\lambda < 0$.
\end{itemize}

%


\subsection{Proof for the case $\lambda=0$}\label{subsecao-lambda-zero} Let \eqref{eq forma normal cusp-fold inicial com lambda} with $\lambda=0$, i.e., the following normal form presenting a cusp-fold singularity at the origin:

\begin{equation}\label{eq forma normal inicial lambda zero}
 Z_{0} (x,y,z) = \left\{
      \begin{array}{ll}
        X^{0}_{a,b} =  \left(
              \begin{array}{c}
              a \\
              0 \\
              b (y+x^2)
\end{array}
      \right)
 & \hbox{if $z \geq 0$,} \\
       Y_{c,d} = \left(
              \begin{array}{c}
               c \\
               d \\
               x
\end{array}
      \right)& \hbox{if $z \leq 0$.}
      \end{array}
    \right.
\end{equation}

Note that $S_{X} = \{ (x,y,z) \in \Sigma \, | \, y=-x^2 \}$ and  $S_{Y} = \{ (x,y,z) \in \Sigma \, |
\, x=0
\}$ are the sets of tangential singularities of $X$ and $Y$ respectively.

\subsubsection{Local dynamics of the normalized sliding vector fields}

Using \eqref{equacao campo normalizado}, the normalized sliding vector field is given by
$$
Z_{0}^{\Sigma} = (a x - b c (y+x^2), - d b (y+x^2)).
$$
So, the eigenvalues of $Z_{0}^{\Sigma}$ are
$$
\lambda^{0}_1 = a \mbox{ and } \lambda^{0}_2 = -db,
$$
the eigenvectors associated to $\lambda^{0}_1$ and $\lambda^{0}_2$ respectively, are
$$
v^{0}_1 = \left(
        \begin{array}{c}
          1 \\
          0 \\
        \end{array}
      \right) \mbox{ and }
      v^{0}_2 = \left(
        \begin{array}{c}
          \frac{bc}{a+bd} \\
          1 \\
        \end{array}
      \right)
$$
and the eigenspaces associated to $\lambda_1$ and $\lambda_2$ respectively, are
\begin{equation}\label{autoespacos}
E^{0}_1 = \{  (x,y,0) \in \Sigma \, | \, y=0  \}
\mbox{ and }
E^{0}_2 =  \Big\{  (x,y,0) \in \Sigma \, | \, y= \frac{(a + bd) x}{bc}  \Big\}.
\end{equation}

In order to obtain a cusp-fold singularity asymptotically stable some hypotheses must be imposed in the parameters.\\

\noindent{\textbf{Hypothesis 1 ($H_1$):}} The fold point generated by the vector field $Y$ must be invisible. So, $c >0$.

%
%

\noindent{\textbf{Hypothesis 2 ($H_2$):}} The origin must be asymptotically stable for $Z^{\Sigma}$. So $$\lambda^{0}_1 = a <0 \mbox{ and } - \lambda^{0}_2 = bd >0.$$

Note that, since $a<0$, the vector field $X^{0}$ goes from the right to the left. This implies that the vector field $Y$ goes from the left to the right in order to permit recurrences. In fact this happens because $c>0$, according to $H_1$.

Following $H_1$ and $H_2$, the phase portrait of $Z$, in $\Sigma^s$, is given by one of the following illustrations, in Figure \ref{fig-duas-cusp-fold}:

\begin{figure}[ht]
\epsfysize=3.5cm \psfrag{A}{$\Sigma^{c-}$}\psfrag{B}{$\Sigma^{c+}$}\psfrag{C}{$\Sigma^{e}$}\psfrag{D}{$\Sigma^{s}$}\psfrag{S}{$\Sigma$}
\centerline{\epsfbox{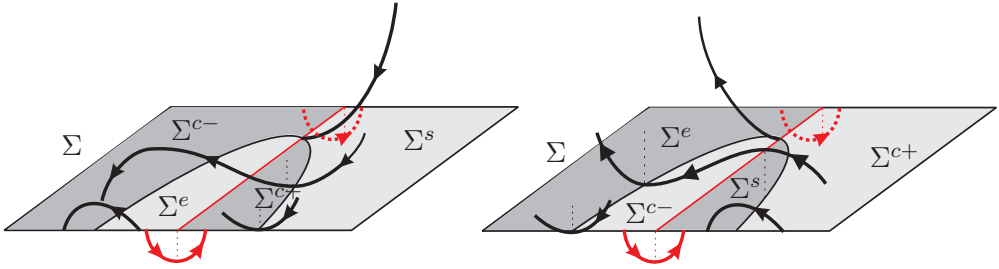}}\caption{The two possible local dynamics of $Z_0$ with hypothesis $H_1$ and $H_2$.}\label{fig-duas-cusp-fold}
\end{figure}

However, as can be easily checked, just at the case $(a)$ of Figure \ref{fig-duas-cusp-fold} we hope some asymptotical stability. So we consider the next hypothesis:\\

\noindent{\textbf{Hypothesis 3 ($H_3$):}} The cusp singularity generated by the vector field $X$ must be of the topological type described in Figure \ref{fig-duas-cusp-fold}. So, $$b <0.$$

By consequence of $H_2$ and $H_3$ we conclude that $d<0$.  Moreover, in $\Sigma^s$ we get $x>0$ and $y+x^2>0$, so $Y_3 - X_3 = x -b (y + x^2)>0$ and the sliding vector field in \eqref{equacao campo normalizado} has the same orientation of \eqref{eq campo filippov}.

\begin{lemma}\label{lema tangencia E1}
The eigenspace $E_2^0$ associated to $\lambda_2$ is tangent to the curve $S_X$, in $\Sigma$.
\end{lemma}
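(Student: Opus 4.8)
The plan is to reduce the statement to a comparison of tangent directions at the origin, which is the common point of the line $E_2^0$ and the parabola $S_X$ and is precisely the cusp point of $X$. Since a line through the origin is tangent to a smooth curve through the origin exactly when it coincides with the tangent line of that curve there, the whole assertion collapses to a single slope computation in $\Sigma$, with no need to integrate the sliding flow.

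First I would compute the tangent direction of $S_X$ at the origin. Regarding $S_X$ as the zero level set of $F(x,y)=y+x^2$, or using the parametrization $x\mapsto(x,-x^2)$, the tangent vector along $S_X$ is $(1,-2x)$, which I would then evaluate at the cusp point $x=0$. On the other side, the direction of the eigenspace is read off immediately from the defining equation $y=\dis\frac{(a+bd)x}{bc}$ of $E_2^0$, equivalently from the eigenvector $v_2^0=(\dis\frac{bc}{a+bd},1)$ of the linearized normalized sliding field $Z_0^{\Sigma}$ at the origin. The lemma is then the assertion that these two directions span the same line in $\Sigma$, which is settled by a short comparison of slopes.

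I do not expect any genuine obstacle here: the argument is brief and self contained, resting only on the explicit normal form \eqref{eq forma normal inicial lambda zero} and the eigendata already computed. The single point that deserves care is to evaluate the tangent of $S_X$ exactly at the cusp point and to pair it with the correct eigendirection of $Z_0^{\Sigma}$, invoking the standing hypotheses $bc\neq0$ and $a+bd\neq0$ so that both the eigenvector $v_2^0$ and the line $E_2^0$ are well defined.
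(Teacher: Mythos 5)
Your reduction is the right one, and it is in effect what the paper's one-line proof does: a line through the origin is tangent to $S_X=\{y=-x^2\}$ at the origin if and only if it coincides with the tangent line of the parabola there, so everything comes down to a comparison of slopes. The problem is that you assert this comparison closes when it does not. The tangent direction of $S_X$ at the cusp point is $(1,-2x)\big|_{x=0}=(1,0)$, i.e.\ the line $y=0$; by \eqref{autoespacos} that line is $E_1^0$, not $E_2^0$. The eigenspace $E_2^0$ has slope $(a+bd)/(bc)$, which is nonzero (indeed negative) under the standing hypotheses, since $a+bd>0$ by $H_4$ and $bc<0$ by $H_1$ and $H_3$. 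So the two directions you propose to identify span different lines, and actually carrying out your ``short comparison of slopes'' would refute rather than prove the displayed statement. Nor can tangency occur elsewhere on the parabola: a line $y=mx$ with $m\neq0$ meets $y=-x^2$ at $x=0$ and at $x=-m$, transversally at both points.

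What has happened is that the statement as printed is almost certainly a misprint: the lemma's internal label refers to $E_1$, and $E_1^0=\{y=0\}$ is exactly the tangent line to $S_X$ at the origin, for which your argument works verbatim. As written, though, your proposal has a genuine gap at its only substantive step: you never evaluate the two slopes, and if you did you would find they disagree. Either prove the corrected statement for $E_1^0$, or, if $E_2^0$ is really intended, the claim requires a different justification that is not supplied by the eigendata in \eqref{autoespacos}.
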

\begin{proof}
Straightforward according to \eqref{autoespacos}.
\end{proof}

Faced to $H_2$, in order to obtain that the sliding region $\Sigma^s$ is invariant for $Z_0^{\Sigma}$ and the origin is asymptotically stable, we impose the following hypothesis:

\noindent{\textbf{Hypothesis 4 ($H_4$):}} $E_2^0$ is stronger than $E_1^0$, i.e., $| \lambda_1| < |\lambda_2|$. So,
$$
-a < bd \Rightarrow 0 < a + bd.
$$

As an immediate consequence of $H_4$, we get $(bc)/(a+ bd) <0$ and $E_2^0 \cap \Sigma^s = \emptyset$ (so, in fact, $\Sigma^s$ is an invariant for $Z_0^{\Sigma}$). See Figure \ref{fig deslize H4}.\\
\begin{figure}[ht]
\epsfysize=2cm \psfrag{A}{$\Sigma^{c-}$}\psfrag{B}{$\Sigma^{c+}$}\psfrag{C}{$\Sigma^{e}$}\psfrag{D}{$\Sigma^{s}$}\psfrag{S}{$\Sigma$} \psfrag{1}{Case $(a)$}\psfrag{2}{Case $(b)$}\psfrag{E1}{$E^{0}_1$}\psfrag{E2}{$E^{0}_2$}
\centerline{\epsfbox{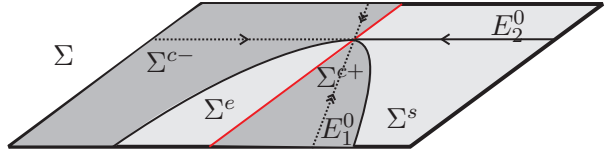}}\caption{Local dynamics of $Z^{\Sigma}$.}\label{fig deslize H4}
\end{figure}
\subsubsection{Local dynamics of the first return map}
Now, in order to determine the dynamics of the positive trajectories of $Z_0$ we consider the First Return Map, given in \eqref{aplicacao-primeiro-retorno}, with $\lambda=0$. We get
$$
\varphi_{Z_0}(x,y) = \left( \frac{a x - \sqrt{-3 a^2 (x^2 + 4 y)}}{2 a} , y + \frac{d (a x - \sqrt{-3 a^2 (x^2 + 4 y)} }{ a c}  \right).
$$

Given a point $p \in \R^3$, it is easy to see that the positive trajectory $\phi^{+}_{Z_0}(p)$ of $Z$ passing through $p$ intersects $\overline{\Sigma^s} \cup \overline{\Sigma^{c+}}$. In what follows we prove that $\phi^{+}_{Z_0}(p) \cap \overline{\Sigma^s} \neq \emptyset$ and, after an appropriated choice on the parameters $a,b,d$, we obtain that  $\phi^{+}_{Z_0}(p)$ converges to the origin when $t\rightarrow +\infty$.

\begin{lemma}\label{lema imagem da parabola}
The image of the curve $y=-x^2$, with $x>0$, by the First Return Map $\varphi_{Z_0}$ is the curve $y= - \frac{x^2}{4} + 2 \frac{d}{c} x$ with $x>0$, i.e.,
$$
\varphi_{Z_0}(\{ y=-x^2 \mbox{ with } x>0 \}) = \Big\{ y= - \frac{x^2}{4} + 2 \frac{d}{c} x \mbox{ with } x>0 \Big\}.
$$
\end{lemma}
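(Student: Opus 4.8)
The plan is to evaluate the explicit formula for $\varphi_{Z_0}$ directly on the parametrized tangency curve $p(x)=(x,-x^2)$, $x>0$ (the set $S_X$, lying on the boundary of the crossing region, where $\varphi_{Z_0}$ is taken in its extended sense), and then eliminate the parameter $x$. The first move is to simplify the radicand: substituting $y=-x^2$ gives $x^2+4y=x^2-4x^2=-3x^2$, hence $-3a^2(x^2+4y)=9a^2x^2$ and the square root collapses to $\sqrt{9a^2x^2}=3\,|a|\,|x|$.

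The delicate point — really the only step that is not pure algebra — is fixing the sign of this radical, and here the standing hypotheses enter. Since $a<0$ we have $|a|=-a$, and since $x>0$ we have $|x|=x$, so $\sqrt{9a^2x^2}=3(-a)x=-3ax$ (which is indeed $\geq 0$). With this identification the first coordinate of $\varphi_{Z_0}(x,-x^2)$ becomes $\dfrac{ax-(-3ax)}{2a}=\dfrac{4ax}{2a}=2x$, and the second coordinate becomes $-x^2+\dfrac{d\bigl(ax-(-3ax)\bigr)}{ac}=-x^2+\dfrac{4dx}{c}$.

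Finally I would eliminate the parameter. Writing $(\bar x,\bar y)=\varphi_{Z_0}(x,-x^2)$, the relation $\bar x=2x$ gives $x=\bar x/2$; inserting this into $\bar y=-x^2+4dx/c$ yields $\bar y=-\dfrac{\bar x^2}{4}+2\dfrac{d}{c}\bar x$, which is precisely the claimed image curve. Since $x>0$ forces $\bar x=2x>0$, the image lies in the branch $\bar x>0$, so the two parametrized curves coincide and the stated set equality follows. I expect no genuine obstacle beyond the careful bookkeeping of the square-root sign, which is exactly where the hypotheses $a<0$ and $x>0$ are used; the rest is routine simplification.
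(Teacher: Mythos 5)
Your proof is correct, and it reaches the same two formulas ($\bar x=2x$ and $\bar y=-x^2+4dx/c$) that the paper obtains, but by a different computational route. You evaluate the closed-form expression for $\varphi_{Z_0}$ (equation \eqref{aplicacao-primeiro-retorno} with $\lambda=0$) on the parabola and resolve the radical $\sqrt{9a^2x^2}=-3ax$ using $a<0$, $x>0$; the paper instead integrates the two flows from scratch, following $P_0=(x_0,-x_0^2,0)$ under $X^0$ to the intermediate point $P_1=(-2x_0,-x_0^2,0)$ at time $t_1=-3x_0/a$, and then under $Y$ to $P_2=(2x_0,\,4dx_0/c-x_0^2,0)$ at time $t_2=4x_0/c$, before reparametrizing by $x=2x_0$. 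Your version is shorter but leans on the stated formula for $\varphi_{Z_{\lambda}}$, which the paper asserts without derivation, so the burden of the sign choice in the radical is carried by that formula's branch convention; the paper's version is self-contained, makes the sign issue disappear into the explicit solution of the ODEs, and additionally records the transition times. Your identification of the square-root sign as the one non-routine step, and your use of exactly the hypotheses $a<0$ and $x>0$ to fix it, is the right focus, and the elimination of the parameter at the end matches the paper's change of variables.
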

\begin{proof} Consider the point $P_0=(x_0, -x_{0}^{2},0)$, with $x_0 >0$. The trajectory of $X_0$ by $P_0$ intersects $\Sigma$ at $P_1=(- 2 x_0, -x_{0}^{2},0)$ after a time $t_1= - 3 x_0/a$. The trajectory of $Y_0$ by $P_1$ intersects $\Sigma$ at $P_2=( 2 x_0, 4 d x_0/c - x_{0}^{2},0)$ after a time $t_2= 4 x_0/c$. Considering the chance of variables $x=2 x_0$, after a time $\overline{t} = t_1 + t_2 = \frac{(4a - 3 c) x_0}{ac}$, the curve $y=-x^2$ return to $\Sigma$ at the curve $y= - \frac{x^2}{4} + 2 \frac{d}{c} x$.
\end{proof}

\begin{lemma}\label{lema imagem do eixo y negativo}
The image of the curve $x=0$, with $y<0$, by the First Return Map $\varphi_{Z_0}$ is the curve $y = - \frac{x^2}{3} + 2 \frac{d}{c} x$ with $x>0$, i.e.,
$$
\varphi_{Z_0}(\{ x=0 \mbox{ with } y<0 \}) = \Big\{ y= - \frac{x^2}{3} + 2 \frac{d}{c} x \mbox{ with } x>0 \Big\}.
$$
\end{lemma}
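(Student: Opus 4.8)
The plan is to mirror the proof of Lemma~\ref{lema imagem da parabola}, tracking a generic point of the curve through the two flows that compose the first return map. Fix $P_0=(0,y_0,0)$ with $y_0<0$. Since $X_3(P_0)=by_0>0$ (using $b<0$ and $y_0<0$) while $Y_3(P_0)=0$, the point $P_0$ is a fold point of $Y$ sitting on $\partial\Sigma^{c}$ (it lies on the common boundary of $\Sigma^{c+}$ and $\Sigma^{s}$); I would therefore work with the continuous extension of $\varphi_{Z_0}$ to the boundary of the crossing region, as remarked after \eqref{expressao-autoespaco-primeiro-retorno}. Because $\varphi_{Z_0}=\gamma_Y\circ\gamma_X$, the task reduces to computing $P_1=\gamma_X(P_0)$ and then $P_2=\gamma_Y(P_1)$ explicitly.

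First I would integrate $X^{0}_{a,b}=(a,0,b(y+x^2))$ from $P_0$. Since $\dot y=0$ the $y$-coordinate stays equal to $y_0$, and $x(t)=at$, so the height satisfies $z(t)=bt\bigl(y_0+\tfrac{a^2t^2}{3}\bigr)$. The orbit enters $z>0$ (as $bt(y_0+\tfrac{a^2t^2}{3})>0$ for small $t>0$) and returns to $\Sigma$ at the positive root $t_1=\sqrt{-3y_0}/|a|$ of $y_0+\tfrac{a^2t^2}{3}=0$, giving $P_1=(-\sqrt{-3y_0},\,y_0,\,0)$. Next I would integrate $Y_{c,d}=(c,d,x)$ from $P_1$: here $x(s)=-\sqrt{-3y_0}+cs$, $y(s)=y_0+ds$, and $z(s)=s\bigl(\tfrac{c}{2}s-\sqrt{-3y_0}\bigr)$. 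Since $Y_3(P_1)=-\sqrt{-3y_0}<0$ the orbit dips into $z<0$ and returns at $s_1=2\sqrt{-3y_0}/c>0$ (here $c>0$ from $H_1$ is what guarantees $s_1>0$), producing $P_2=\bigl(\sqrt{-3y_0},\,y_0+\tfrac{2d}{c}\sqrt{-3y_0},\,0\bigr)$.

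Finally I would reparametrize by $x=\sqrt{-3y_0}$, equivalently $y_0=-x^2/3$; substituting into the second coordinate of $P_2$ yields exactly $y=-\tfrac{x^2}{3}+\tfrac{2d}{c}x$. As $y_0$ ranges over the negative values near $0$, the quantity $x=\sqrt{-3y_0}$ sweeps the positive half-line, so the image is precisely the stated curve. I expect the only delicate points to be the bookkeeping of signs that certifies each half-orbit actually lies in its own half-space and realizes a genuine first return (this is where $a<0$, $b<0$, $c>0$ enter), together with the justification that $P_0\in\partial\Sigma^{c}$ is covered by the boundary extension of $\varphi_{Z_0}$; the algebra itself is elementary, and the same conclusion can be read off by substituting $x=0$, $y=y_0$ directly into the explicit expression for $\varphi_{Z_0}$, which furnishes an independent check and explains why the leading coefficient here is $-1/3$ rather than the $-1/4$ obtained in Lemma~\ref{lema imagem da parabola}.
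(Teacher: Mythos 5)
Your computation follows exactly the paper's route (flow $P_0$ up with $X$, down with $Y$, then set $x=\sqrt{-3y_0}$), and in fact your intermediate points $P_1=(-\sqrt{-3y_0},y_0,0)$ and $P_2=(\sqrt{-3y_0},\,y_0+\tfrac{2d}{c}\sqrt{-3y_0},\,0)$ are the corrected versions of the paper's (which contain typos in the $y$-coordinates); the sign bookkeeping and the boundary-extension remark are sound. The only slip is the parenthetical claim that $x=0$, $y<0$ separates $\Sigma^{c+}$ from $\Sigma^{s}$: since $X_3=b(y+x^2)>0$ there for small $|x|$, the adjacent region for $x<0$ is $\Sigma^{e}$, not $\Sigma^{s}$ --- but this does not affect the argument.
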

\begin{proof}Consider the point $P_0=(0, y_{0}^{2},0)$, with $y_0 <0$. The trajectory of $X_0$ by $P_0$ intersects $\Sigma$ at $P_1=(- \sqrt{ - 3 y_0}, -y_0,0)$ after a time $t_1=  - \frac{\sqrt{ - 3 y_0}}{a}$. The trajectory of $Y_0$ by $P_1$ intersects $\Sigma$ at $P_2=( \sqrt{ - 3 y_0}, \frac{ 2 \sqrt{ - 3 y_0}}{c},0)$ after a time $t_2= \frac{2 \sqrt{ - 3 y_0}}{c}$. Considering the change of variables $x= \sqrt{ - 3 y_0}$, after a time $\overline{t} = t_1 + t_2 = \frac{(2a - c) \sqrt{ - 3 y_0}}{ac}$, the curve $x=0$ return to $\Sigma$ at the curve $y= y= - \frac{x^2}{3} + 2 \frac{d}{c} x$.
\end{proof}

\begin{lemma}\label{lema os pontos da costura com x positivo caem na regiao entre as curvas} The image of the set $\Sigma^{c+}$, by the First Return Map $\varphi_{Z_0}$ remains between the curves $y = - \frac{x^2}{3} + 2 \frac{d}{c} x$ and $y= - \frac{x^2}{4} + 2 \frac{d}{c} x$ with $x>0$, i.e.,
$$
\varphi_{Z_0}(\Sigma^{c+}) \subset \Big\{ (x,y,0) \in \Sigma \, | \, - \frac{x^2}{3} + 2 \frac{d}{c} x < y < - \frac{x^2}{4} + 2 \frac{d}{c} x \mbox{ with } x>0 \Big\}.
$$
\end{lemma}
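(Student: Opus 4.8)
The plan is to turn the two-sided inclusion into a single scalar inequality that is already encoded in the definition of $\Sigma^{c+}$, and then to confirm it by a short computation with the explicit map $\varphi_{Z_0}$.

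First I would describe $\Sigma^{c+}$ concretely. Because $b<0$ and $c>0$, a point $(x,y,0)$ satisfies $X_3=b(y+x^2)>0$ and $Y_3=x>0$ precisely when $x>0$ and $y<-x^2$; thus $\Sigma^{c+}=\{x>0,\ y<-x^2\}$, and its two boundary arcs are $S_X\cap\{x>0\}=\{y=-x^2\}$ and $S_Y\cap\{y<0\}=\{x=0\}$. By Lemmas \ref{lema imagem da parabola} and \ref{lema imagem do eixo y negativo} these arcs are sent by $\varphi_{Z_0}$ onto the two curves $y=-\frac{x^2}{4}+2\frac dc x$ and $y=-\frac{x^2}{3}+2\frac dc x$. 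Since $\varphi_{Z_0}$ is a diffeomorphism on the open region $\Sigma^{c+}$, its image is the region bounded by these two curves, and the whole content of the lemma is to check that the image is the side lying strictly between them.

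The computation is made short by an algebraic feature of \eqref{aplicacao-primeiro-retorno} at $\lambda=0$. Writing $\varphi_{Z_0}(x,y)=(x_2,y_2)$ with $x_2=\frac{x+\sqrt{-3(x^2+4y)}}{2}$, one rewrites the second coordinate as $y_2=y+2\frac dc x_2$. Substituting this into the target, the bounds $-\frac{x_2^2}{3}+2\frac dc x_2<y_2<-\frac{x_2^2}{4}+2\frac dc x_2$ collapse to the parameter-free statement $-\frac{x_2^2}{3}<y<-\frac{x_2^2}{4}$, equivalently $-3y<x_2^2<-4y$.

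Finally I would verify these two inequalities from $x_2^2=\tfrac12\bigl(-x^2+x\sqrt{-3(x^2+4y)}-6y\bigr)$. The lower bound $x_2^2>-3y$ reduces to $\sqrt{-3(x^2+4y)}>x$, i.e. to $y<-\frac{x^2}{3}$; the upper bound $x_2^2<-4y$ reduces, after squaring the two positive quantities $x\sqrt{-3(x^2+4y)}$ and $x^2-2y$, to $(x^2+y)^2>0$, i.e. to $y\neq-x^2$. Both are immediate from the defining inequality $y<-x^2$ of $\Sigma^{c+}$ (which gives $y<-x^2<-\frac{x^2}{3}$ and $y\neq-x^2$), and $x_2>0$ since $x>0$. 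I expect the only point requiring care to be the sign bookkeeping when squaring, where one must record that $x>0$ and $y<-x^2<0$; these same hypotheses are exactly what make the reduction via $y_2=y+2\frac dc x_2$ effective, since that relation strips the parameters $c,d$ out of the estimate entirely and leaves a pure inequality in $x$ and $y$.
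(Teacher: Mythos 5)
Your proposal is correct, and it is genuinely more complete than the paper's own argument. The paper's proof is a two-line topological assertion: the two boundary arcs of $\Sigma^{c+}$ (the parabola $y=-x^2$, $x>0$, and the half-axis $x=0$, $y<0$) are sent by Lemmas \ref{lema imagem da parabola} and \ref{lema imagem do eixo y negativo} onto the two bounding curves, and the image of an interior point is simply declared (``it is easy to see'') to lie between them. You reproduce that observation as motivation but then actually prove the containment: the identity $y_2=y+\tfrac{2d}{c}x_2$ strips the parameters $c,d$ from the estimate, reducing the two-sided bound to $-3y<x_2^2<-4y$, and the explicit formula $x_2^2=\tfrac12\bigl(-x^2+x\sqrt{-3(x^2+4y)}-6y\bigr)$ turns the lower bound into $y<-x^2/3$ and the upper bound into $(x^2+y)^2>0$, both immediate from $y<-x^2<0$ and $x>0$ on $\Sigma^{c+}$. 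I checked the algebra (including the sign conventions coming from $a<0$ in the square root of \eqref{aplicacao-primeiro-retorno}) and it is right. What your computation buys is a self-contained verification that does not rest on the informal ``interior maps between the images of the boundary'' step, and it establishes the strictness of both inequalities explicitly; the one caveat is that your intermediate remark that a diffeomorphism must send the region onto ``the region bounded by'' the image curves is not itself a rigorous argument, but your subsequent computation makes it unnecessary.
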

\begin{proof} Given a point $P_0=(x_0,y_0,0) \in \Sigma^{c+}$ (where $x_0>0$ and $y_0<0$), it is easy to see that the trajectory of $X_0$ by $P_0$ intersects $\Sigma$ at $P_1 \in \Sigma^{c-}$ and the trajectory of $Y_0$ by $P_1$ intersects $\Sigma$ at $P_2$, where $P_2$ is situated between the curves $y = - \frac{x^2}{3} + 2 \frac{d}{c} x$ and $y= - \frac{x^2}{4} + 2 \frac{d}{c} x$ which correspond to the images of the curves  $x=0$, with $y<0$ and $y=-x^2$, with $x>0$, respectively.
\end{proof}

\begin{lemma}\label{lema coordenada x do retorno dos pontos da costura crescem}
Given $p_0=(x_0,y_0,0) \in \overline{\Sigma^{c+}}$, call $p_1=(x_1,y_1,0)=\varphi_{Z_0}(p_0)$ and $p_n=(x_n,y_n,0)=\varphi^{n}_{Z_0}(p_0)$, when it is well defined. Then $x_1>x_0$ and $x_n \rightarrow \infty$ when $n\rightarrow \infty$.
\end{lemma}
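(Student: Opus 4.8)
The plan is to obtain a closed form for the first coordinate of $\varphi_{Z_0}$ and to read off directly that it expands the variable $x$ on $\overline{\Sigma^{c+}}$. First I would exploit the hypothesis $a<0$ to clean up the radical appearing in \eqref{aplicacao-primeiro-retorno} with $\lambda=0$: since $a<0$ we have $\sqrt{-3a^2(x^2+4y)}=-a\sqrt{-3(x^2+4y)}$, so the first component of $\varphi_{Z_0}(x_0,y_0)$ reduces to
\[
x_1=\frac{x_0+\sqrt{-3(x_0^2+4y_0)}}{2},
\]
an expression free of the parameter $a$. On $\overline{\Sigma^{c+}}$ the radicand is nonnegative, so $x_1$ is well defined.

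Next I would translate the geometric location of $p_0$. Because $X_3=b(y+x^2)$ with $b<0$ and $Y_3=x$, we have $\Sigma^{c+}=\{x>0,\ y<-x^2\}$, hence any $p_0=(x_0,y_0,0)\in\overline{\Sigma^{c+}}$ satisfies $x_0\ge 0$ and $y_0\le -x_0^2$. The latter gives $-3(x_0^2+4y_0)\ge 9x_0^2$, so $\sqrt{-3(x_0^2+4y_0)}\ge 3x_0$ and therefore $x_1\ge 2x_0$. On $\Sigma^{c+}$ the inequality is strict; moreover, at a boundary point with $x_0=0$ and $y_0<0$ one gets $x_1=\sqrt{-3y_0}>0=x_0$, the only exception being the origin, which is the fixed point. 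This already yields $x_1>x_0$ and, away from the origin, the stronger bound $x_1>2x_0$.

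To reach $x_n\to\infty$ I would iterate, the key point being that the doubling estimate $x_{n+1}>2x_n$ propagates along the orbit, which requires each $p_n$ to remain in $\Sigma^{c+}$ so that $\varphi_{Z_0}$ can be reapplied. Here I would invoke Lemma~\ref{lema os pontos da costura com x positivo caem na regiao entre as curvas}: the image $\varphi_{Z_0}(\Sigma^{c+})$ is trapped between the curves $y=-\frac{x^2}{3}+2\frac{d}{c}x$ and $y=-\frac{x^2}{4}+2\frac{d}{c}x$. Since $c>0$ and $d<0$ give $d/c<0$, for $x$ in the small neighborhood the linear term dominates and both curves lie strictly below $y=-x^2$; hence the image again lies in $\Sigma^{c+}$. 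By induction every iterate is well defined, $x_{n+1}>2x_n$ holds at each step, and consequently $x_n>2^n x_0\to\infty$.

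The main obstacle I anticipate is precisely this control of the domain: one must ensure that the successive images do not fall above the tangency parabola $y=-x^2$ before the expansion argument can be closed, and this is exactly where the containment of Lemma~\ref{lema os pontos da costura com x positivo caem na regiao entre as curvas}, together with the sign $d/c<0$, is indispensable. A secondary point worth stating explicitly is the interpretation of $x_n\to\infty$ inside the bounded chart $K=\{x^2+y^2+z^2<\delta\}$: since $x$ is bounded by $\delta$ there, the divergence is to be understood for the global normal form, and within $K$ it records the decisive dynamical fact that the monotone doubling forces the orbit to leave $\Sigma^{c+}$ after finitely many returns and enter $\overline{\Sigma^s}$, where the attracting sliding dynamics of the previous subsection take over.
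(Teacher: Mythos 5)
Your computation of the first component, $x_1=\tfrac{1}{2}\bigl(x_0+\sqrt{-3(x_0^2+4y_0)}\bigr)$, the use of $y_0\le -x_0^2$ to get $x_1\ge 2x_0$ (strict off the boundary), and the resulting geometric growth $x_n>2^nx_0$ coincide with the paper's own argument; in fact your derivation is cleaner, since the paper first proves only $x_1>x_0$ from the weaker bound $y_0<-x_0^2/3$ and justifies divergence by the dubious remark that $x_{n+1}/x_n>1$ forces $x_n\to\infty$, whereas your ratio $>2$ genuinely does.

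The one step that fails is your domain-control induction. You invoke Lemma \ref{lema os pontos da costura com x positivo caem na regiao entre as curvas} to claim that $\varphi_{Z_0}(\Sigma^{c+})\subset\Sigma^{c+}$ and conclude that \emph{every} iterate is well defined. But the containment of the curves $y=-\frac{x^2}{3}+2\frac{d}{c}x$ and $y=-\frac{x^2}{4}+2\frac{d}{c}x$ below the parabola $y=-x^2$ holds only for $x$ small (for the outer curve one needs $\frac{3x}{4}+2\frac{d}{c}<0$, i.e.\ $0<x<-\frac{8d}{3c}$); since $x_n$ at least doubles at each step, the orbit leaves this regime after finitely many returns, and the image region then pokes above the parabola into $\overline{\Sigma^s}$. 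Indeed, if all iterates remained in $\Sigma^{c+}$, Proposition \ref{proposicao a trajetoria cai no sliding} would be false, since its whole point is that some iterate lands in $\overline{\Sigma^s}$ — a tension your own closing paragraph correctly identifies but does not reconcile with the induction. The repair is immediate and is what the paper does: no domain control is needed, because the qualifier ``when it is well defined'' in the statement already supplies $p_n\in\overline{\Sigma^{c+}}$, hence $y_n\le -x_n^2$ and $x_{n+1}\ge 2x_n$, for every $n$ at which the return map applies; the doubling then forces the orbit out of any neighborhood of the origin within $\Sigma^{c+}$ after finitely many iterates, which is the content actually used later.
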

\begin{proof} Given $p_0=(x_0,y_0,0) \in \overline{\Sigma^{c+}}$, a straightforward calculus show that $x_1= \frac{x_0}{2} + \frac{\sqrt{3}\sqrt{-(x_{0}^{2} + 4 y_0)}}{2}$ where $p_1=(x_1,y_1,0)=\varphi_{Z_0}(p_0)$. Since $p_0 \in \overline{\Sigma^{c+}}$ we conclude that $y_0 \leq - x_{0}^{2} < - x_{0}^{2}/3$. So,
$$
y_0  < - x_{0}^{2}/3 \Rightarrow - 4 x_{0}^{2}- 12 y_0>0 \Rightarrow 3 (-( x_{0}^{2} + 4 y_0)) > x^2_0 \Rightarrow $$$$\frac{\sqrt{3} \sqrt{-( x_{0}^{2} + 4 y_0)}}{2} > \frac{x_0}{2} \Rightarrow x_1 > x_0.
$$
If $p_1 \in \overline{\Sigma^{s}}$ then a First Return Map is not defined. Otherwise we repeat the previous argument. A recursive analysis prove that $x_{n+1} > x_{n}$. In fact, repeating the previous argument
$$
x_{n+1} = \frac{x_n + \sqrt{3} \sqrt{-( x_{n}^{2} + 4 y_n)} }{2} > 2 x_n \Rightarrow   \frac{x_{n+1}}{x_n} > 2.
$$
Moreover, $\frac{x_{n+1}}{x_{n}}>1$ implies, by a test of convergence of sequences, that $x_n \rightarrow \infty$.
\end{proof}

\begin{proposition}\label{proposicao a trajetoria cai no sliding}
For all $p \in \R^3$ it happens $\phi^{+}_{Z_0}(p) \cap \overline{\Sigma^s} \neq \emptyset$.
\end{proposition}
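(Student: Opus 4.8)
The plan is to show that every forward trajectory of $Z_0$ must eventually meet $\overline{\Sigma^s}$, by combining the dynamics on each side of $\Sigma$ with the monotonicity properties of the First Return Map established in the preceding lemmas. First I would reduce to the case $p \in \Sigma$. Indeed, for an arbitrary $p \in \R^3$, the constant-in-$z$ nature of the first two components of $X^0_{a,b}$ and $Y_{c,d}$ together with the sign structure of the third components $X_3 = b(y+x^2)$ and $Y_3 = x$ forces the flow to reach $\Sigma$ in finite time (the orbit of $X^0$ in $\Sigma_+$ and of $Y$ in $\Sigma_-$ each cross $z=0$, since $a<0$, $c>0$ drive the $x$-coordinate and the cubic/quadratic contact functions change sign). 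So it suffices to analyze the itinerary of a point starting on $\Sigma$, and the only regions where a nontrivial future evolution occurs are $\overline{\Sigma^{c+}}$ and $\overline{\Sigma^{c-}}$, since on $\Sigma^s$ we are already done and on $\Sigma^{e}$ the trajectory splits but each branch re-enters a crossing or sliding region.

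Next I would track the iterates of the First Return Map on the crossing region. The key point is that by Lemma \ref{lema coordenada x do retorno dos pontos da costura crescem} we have, for a point $p_0 = (x_0,y_0,0) \in \overline{\Sigma^{c+}}$, the strict inequality $x_{n+1} > x_n$ and in fact $x_{n+1}/x_n > 2$, so the $x$-coordinates of the returns grow without bound. Meanwhile, by Lemma \ref{lema os pontos da costura com x positivo caem na regiao entre as curvas}, after one application each return point $p_{n}$ (for $n\geq 1$) lies in the region trapped between the two parabolic arcs $y = -\tfrac{x^2}{3} + 2\tfrac{d}{c}x$ and $y = -\tfrac{x^2}{4} + 2\tfrac{d}{c}x$ with $x>0$. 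The crucial observation is that the sliding region $\Sigma^s$ is characterized by $X_3 < 0$ and $Y_3 > 0$, i.e. by $y + x^2 > 0$ (using $b<0$) and $x>0$; so the boundary $\overline{\Sigma^s}$ contains the tangency curve $S_X = \{y = -x^2\}$. I would show that for $x$ large enough both trapping parabolas lie above the curve $y = -x^2$, which bounds the crossing region $\overline{\Sigma^{c+}}$ from above (since $p \in \overline{\Sigma^{c+}}$ requires $y \leq -x^2$). Concretely, $-\tfrac{x^2}{3} + 2\tfrac{d}{c}x > -x^2$ is equivalent to $\tfrac{2x^2}{3} + 2\tfrac{d}{c}x > 0$, which holds for all sufficiently large $x>0$ regardless of the sign of $d/c$.

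Putting these together gives the contradiction that closes the argument: if the forward trajectory never met $\overline{\Sigma^s}$, then $\varphi_{Z_0}^n(p_0)$ would be defined for all $n$ and would remain in $\overline{\Sigma^{c+}}$, hence would satisfy $y_n \leq -x_n^2$; but the return points for $n\geq 1$ lie in the region $y_n > -\tfrac{x_n^2}{3} + 2\tfrac{d}{c}x_n$, and once $x_n$ is large enough this lower bound exceeds $-x_n^2$, violating $y_n \leq -x_n^2$. Since $x_n \to \infty$ by Lemma \ref{lema coordenada x do retorno dos pontos da costura crescem}, the constraint $x_n$ large is eventually met, so some iterate must fail to lie in $\overline{\Sigma^{c+}}$; the only alternative in Definition \ref{definicao aplicacao primeiro retorno} is that the point has already entered $\overline{\Sigma^s}$, which is exactly the conclusion $\phi^+_{Z_0}(p)\cap \overline{\Sigma^s} \neq \emptyset$. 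I expect the main obstacle to be the bookkeeping of the boundary cases in item (iv) of Definition \ref{definicao trajetorias}, namely ensuring that a trajectory arriving tangentially at $S_X$ or at $\partial \Sigma^{c+}$ is correctly interpreted as entering $\overline{\Sigma^s}$ rather than stalling, and verifying that no orbit can escape to $z \neq 0$ without returning; handling the escaping-region splitting cleanly is the delicate step, but the unbounded growth of $x_n$ guarantees the geometric trapping forces the orbit into $\overline{\Sigma^s}$ in finitely many returns.
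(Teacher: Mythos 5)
Your proposal is correct and follows essentially the same route as the paper: reduce to points of $\overline{\Sigma^{c+}}$, use Lemma \ref{lema os pontos da costura com x positivo caem na regiao entre as curvas} to trap the returns between the two parabolas and Lemma \ref{lema coordenada x do retorno dos pontos da costura crescem} to send $x_n\to\infty$, and conclude that an iterate must land in $\overline{\Sigma^s}$. The only (harmless) difference is that you phrase the last step as a contradiction with $y_n\le -x_n^2$ and explicitly verify that the lower trapping parabola eventually lies above $y=-x^2$, whereas the paper directly asserts the existence of $n_0$ with $y_{n_0}+x_{n_0}^2\ge 0$.
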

\begin{proof}
As we observed above, given a point $p \in \R^3$, it is easy to see that  $\phi^{+}_{Z_0}(p) \cap [\overline{\Sigma^s} \cup \overline{\Sigma^{c+}}] \neq \emptyset$. So, it is enough to prove that $\varphi^{n_0}_{Z_0}(\overline{\Sigma^{c+}}) \subset \overline{\Sigma^s}$ for some $n_0>0$.

By Lemmas \ref{lema imagem da parabola}, \ref{lema imagem do eixo y negativo} and \ref{lema os pontos da costura com x positivo caem na regiao entre as curvas} we obtain that
$$\varphi_{Z_0}(\overline{\Sigma^{c+}}) \subset \Big\{ (x,y,0) \in \Sigma \, | \, \frac{x^2}{3} + 2 \frac{d}{c} x \leq y \leq - \frac{x^2}{4} + 2 \frac{d}{c} x \mbox{ with } x>0 \Big\}.$$

By Lemma \ref{lema coordenada x do retorno dos pontos da costura crescem}, there exists $n_0>0$ such that $p_{n_0}=(x_{n_{0}},y_{n_{0}},0)=\varphi^{n_{0}}_{Z_0}(p)$ satisfies $y_{n_{0}} + x_{n_{0}}^2 \geq 0$. Therefore $p_{n_{0}} \in \overline{\Sigma^s}$.
\end{proof}

\subsection{Proof of the case $\lambda \neq0$}\label{secao prova caso lambda diferente de 0}

When $\lambda\neq 0$, we consider the normal form \eqref{eq forma normal cusp-fold inicial com lambda}, presenting a fold-fold singularity at the origin, since $b\neq0$. The local dynamics for $Z_{\lambda}$ is given in Figure \ref{dobra-cuspide-L-pos-neg}. The tangential sets $S_{X}$ and $S_{Y}$ remains the same as the ones established in Subsection \ref{subsecao-lambda-zero}.

\begin{figure}[ht]
\epsfysize=3.7cm \psfrag{a}{$\Sigma^{c-}$}\psfrag{b}{$\Sigma^{c+}$}\psfrag{c}{$\Sigma^{e}$}\psfrag{d}{$\Sigma^{s}$}\psfrag{S}{$\Sigma$} \psfrag{1}{Case $\lambda<0$}\psfrag{2}{Case $\lambda>0$}\psfrag{E1}{$E^{\lambda}_1$}\psfrag{E2}{$E^{\lambda}_2$}
\centerline{\epsfbox{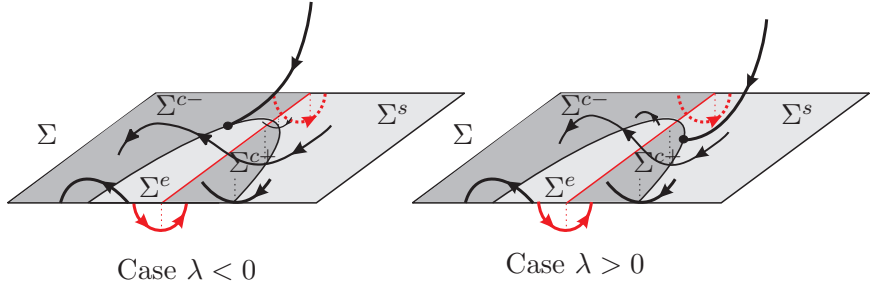}}\caption{The local dynamics of $Z_{\lambda}$ with hypothesis $H_1$ and $H_3$.}\label{dobra-cuspide-L-pos-neg}
\end{figure}

\subsection{Local dynamics of the normalized sliding vector fields}

According to \eqref{equacao campo normalizado}, the normalized sliding vector field is given by
\begin{equation}\label{equacao campo normalizado com lambda}
Z_{\lambda}^{\Sigma} = (a x - b c (y+x^2), \lambda x - d b (y+x^2)).
\end{equation}
Let
\[
\Delta_3=(a + bd)^2 - 4 bc \lambda.
\] So, the eigenvalues of $DZ_{\lambda}^{\Sigma}(0,0)$ are
$$
\lambda^{\lambda}_1 = \frac{a - bd - \sqrt{\Delta_3} }{2} \mbox{ and } \lambda^{\lambda}_2 = \frac{a - bd + \sqrt{\Delta_3}}{2},
$$
the eigenvectors associated to $\lambda_1^{\lambda}$ and $\lambda_2^{\lambda}$ respectively, are
$$
v^{\lambda}_1 = \left(
        \begin{array}{c}
          \frac{a + bd - \sqrt{\Delta_3}}{2 \lambda} \\
          1 \\
        \end{array}
      \right) \mbox{ and }
      v^{\lambda}_2 = \left(
        \begin{array}{c}
          \frac{a + bd + \sqrt{\Delta_3} }{2 \lambda} \\
          1 \\
        \end{array}
      \right)
$$
and the eigenspaces associated to $\lambda_1$ and $\lambda_2$ respectively, are

\begin{equation}\label{autoespacos com lambda}
\begin{array}{ll}  E^{\lambda}_1    &= \left\{  (x,y,0) \in \Sigma \, | \, y= \frac{2 \lambda}{a + bd - \sqrt{\Delta_3}} x  \right\}\\\\
                   E^{\lambda}_2    &= \left\{  (x,y,0) \in \Sigma \, | \, y= \frac{2 \lambda}{a + bd + \sqrt{\Delta_3}} x  \right\}.
\end{array}\end{equation}
Under the hypotheses $H_1,\dots, H_4$, we get the following results:

\begin{lemma}\label{lema tangencia E2}
The eigenspace $E^{\lambda}_1\subset \Sigma^c$ and
\begin{itemize}
\item[(a)] $E^{\lambda}_2 \subset [\Sigma^s\cup \Sigma^e]$ since $\lambda > 0$ and

\item[(b)] $E^{\lambda}_2 \subset \Sigma^c$ since $\lambda < 0$, see Figure \ref{dobra-cuspide-deslizante-Lpos-neg}.
\end{itemize}\end{lemma}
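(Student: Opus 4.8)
The plan is to reduce every region-membership claim to elementary sign computations on $\Sigma$, where the transversal components of the two fields are simply $X_3 = b(y+x^2)$ and $Y_3 = x$. First I would record the local description of the Filippov regions forced by $H_3$ (so $b<0$) and $H_1$ (so $c>0$): reading off the defining inequalities gives $\Sigma^s = \{\,y+x^2>0,\ x>0\,\}$, $\Sigma^e = \{\,y+x^2<0,\ x<0\,\}$, $\Sigma^{c+} = \{\,y+x^2<0,\ x>0\,\}$ and $\Sigma^{c-} = \{\,y+x^2>0,\ x<0\,\}$, so that near the origin the partition is governed by the parabola $y=-x^2$ together with the $y$-axis.

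Next I would prove the decisive geometric fact, namely that a line $y=mx$ through the origin with $m\neq 0$ lies, in a small enough neighborhood of the origin, in $\Sigma^s\cup\Sigma^e$ when $m>0$ and in $\Sigma^c$ when $m<0$. On such a line $y+x^2 = x(m+x)$, whose sign for small $|x|$ is that of $mx$; combining this with $\sgn(Y_3)=\sgn(x)$ and $b<0$ yields, when $m>0$, the right ray ($x>0$) in $\Sigma^s$ and the left ray ($x<0$) in $\Sigma^e$, and when $m<0$ the right ray in $\Sigma^{c+}$ and the left ray in $\Sigma^{c-}$. I would stress that this is a local statement near the origin: since the eigen-slopes computed below are of order $\lambda$, the left ray of a positively sloped line eventually re-enters $\Sigma^{c-}$, but only outside a neighborhood shrinking with $\lambda$, which is harmless for the local analysis of Figure~\ref{dobra-cuspide-deslizante-Lpos-neg}.

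It then remains to read off the signs of the slopes $m_1 = \frac{2\lambda}{a+bd-\sqrt{\Delta_3}}$ and $m_2 = \frac{2\lambda}{a+bd+\sqrt{\Delta_3}}$ of $E^{\lambda}_1$ and $E^{\lambda}_2$ from \eqref{autoespacos com lambda}. Writing $s=a+bd$, hypothesis $H_4$ gives $s>0$, while $H_1$ and $H_3$ give $bc<0$, so $\Delta_3 = s^2 - 4bc\lambda$. For $\lambda>0$ the term $-4bc\lambda$ is positive, whence $\Delta_3>s^2$ and $\sqrt{\Delta_3}>s$; thus $a+bd-\sqrt{\Delta_3}<0$ and $a+bd+\sqrt{\Delta_3}>0$, giving $m_1<0$ and $m_2>0$. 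For small $\lambda<0$ the term $-4bc\lambda$ is negative, so $0<\Delta_3<s^2$ and $0<\sqrt{\Delta_3}<s$; both denominators are positive while the numerator $2\lambda$ is negative, giving $m_1<0$ and $m_2<0$. In either case $m_1<0$, so by the geometric fact $E^{\lambda}_1\subset\Sigma^c$; for $\lambda>0$ the inequality $m_2>0$ yields $E^{\lambda}_2\subset\Sigma^s\cup\Sigma^e$, and for $\lambda<0$ the inequality $m_2<0$ yields $E^{\lambda}_2\subset\Sigma^c$, which are exactly claims (a) and (b).

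The only genuine obstacle is the comparison of $\sqrt{\Delta_3}$ with $s=a+bd$, which is precisely where the sign of $\lambda$ enters through $-4bc\lambda$ (positive for $\lambda>0$, negative for $\lambda<0$, using $bc<0$); everything else is bookkeeping. I would also flag at the outset that the eigenspaces are genuinely real near the origin for small $\lambda<0$, since $\Delta_3\to s^2>0$ as $\lambda\to 0$, so $\Delta_3>0$ on a whole interval $(\,s^2/(4bc),\,0\,)$.
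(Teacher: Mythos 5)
Your proof is correct and is exactly the computation the paper waves off as ``Straightforward according to \eqref{autoespacos com lambda}'': you identify the Filippov regions from the signs of $X_3=b(y+x^2)$ and $Y_3=x$ under $H_1$--$H_4$, and reduce membership of each eigenline to the sign of its slope $2\lambda/(a+bd\mp\sqrt{\Delta_3})$, which you determine correctly by comparing $\sqrt{\Delta_3}$ with $a+bd$ according to the sign of $-4bc\lambda$. The added care about locality near the origin and the reality of $\Delta_3$ for small $\lambda<0$ is a welcome supplement to the paper's one-line proof.
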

\begin{proof}
Straightforward according to \eqref{autoespacos com lambda}.
\end{proof}

\begin{figure}[ht]
\epsfysize=2.5cm \psfrag{a}{$\Sigma^{c-}$}\psfrag{b}{$\Sigma^{c+}$}\psfrag{c}{$\Sigma^{e}$}\psfrag{d}{$\Sigma^{s}$}\psfrag{S}{$\Sigma$} \psfrag{1}{Case $\lambda<0$}\psfrag{2}{Case $\lambda>0$}\psfrag{E1}{$E^{\lambda}_1$}\psfrag{E2}{$E^{\lambda}_2$}
\centerline{\epsfbox{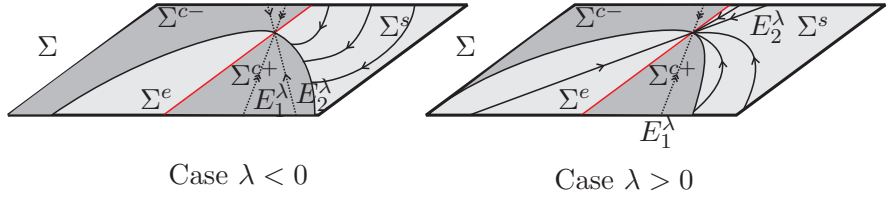}}\caption{The local dynamics of $Z_{\lambda}^{\Sigma}$ with hypothesis $H_1,\dots, H_4$.}\label{dobra-cuspide-deslizante-Lpos-neg}
\end{figure}

Note that in case $\lambda<0$, the sliding vector fields has a transient behavior in $\Sigma^s$, i.e., all the obits in $\Sigma^s$ will be iterated by the first return map, whereas in case $\lambda>0$, $Z_{\lambda}^{\Sigma}$ is asymptotic stable at origin.

\subsubsection{Local dynamics of the first return map}

Now, in order to determine the dynamics of the positive trajectories of $Z_{\lambda}$ we consider the First Return Map $\varphi_{Z_{\lambda}}$ of $Z_{\lambda}$ whose expression is given in \eqref{aplicacao-primeiro-retorno}. 


\begin{lemma}\label{lema dinamica-primeiro-retorno-Ldifzero}
Under the hypothesis $H_1, \dots, H_4$ the origin is a hyperbolic saddle fixed point for $\varphi_{Z_{\lambda}}$ and
\begin{itemize}
\item [$(a)$] $S_{\pm}^{\lambda}\subset \Sigma^c$ since $\lambda>0$
\item [$(b)$] $S_+^{\lambda}\subset \Sigma^c, S_-^{\lambda}\subset [\Sigma^e\cup \Sigma^s]$ since $\lambda<0$.
\end{itemize}
Besides, $S_+^{\lambda}, S_-^{\lambda}$ is a expansive, contraction direction, respectively.
\end{lemma}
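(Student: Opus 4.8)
The plan is to extract the whole statement from the explicit eigendata \eqref{expressao-autovalor-primeiro-retorno}--\eqref{expressao-autoespaco-primeiro-retorno} at the origin (already known to be the unique fixed point of $\varphi_{Z_{\lambda}}$ near it), using the sign conditions $a<0$, $c>0$ and $d<0$ forced by $H_1,H_2,H_3$ (so that $ad>0$) together with the smallness of $\lambda$. First I would prove the hyperbolic saddle character by computing the product $\xi_+^{\lambda}\xi_-^{\lambda}$. The algebraic identity $(2ad-c\lambda)^2-4\Delta_2=(c\lambda)^2$, which follows at once from $\Delta_2=(ad)^2-adc\lambda=ad(ad-c\lambda)$, gives $\xi_+^{\lambda}\xi_-^{\lambda}=1$. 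Since $ad>0$, the factor $ad-c\lambda$ is positive (automatically for $\lambda<0$, and because $\lambda<ad/c$ for small $\lambda>0$), hence $\Delta_2>0$ and the two eigenvalues in \eqref{expressao-autovalor-primeiro-retorno} are real and distinct. A product equal to $1$ together with distinctness excludes $\xi_{\pm}^{\lambda}=\pm 1$, so exactly one eigenvalue has modulus greater than $1$ and the other modulus smaller than $1$: this is precisely a hyperbolic saddle.

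Next I would decide which direction expands. Writing the numerators of $\xi_{\pm}^{\lambda}$ as $2ad-c\lambda\pm 2\sqrt{\Delta_2}$ and using once more $(2ad-c\lambda)^2-4\Delta_2=(c\lambda)^2>0$ to compare $2ad-c\lambda$ with $2\sqrt{\Delta_2}$, while tracking the sign of the denominator $c\lambda$, I would obtain $\xi_+^{\lambda}>1>\xi_-^{\lambda}>0$ when $\lambda>0$ and $\xi_+^{\lambda}<-1<\xi_-^{\lambda}<0$ when $\lambda<0$. In both regimes $|\xi_+^{\lambda}|>1>|\xi_-^{\lambda}|$, so $S_+^{\lambda}$ is the expanding and $S_-^{\lambda}$ the contracting direction, as asserted.

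Finally I would locate the eigenspaces \eqref{expressao-autoespaco-primeiro-retorno} inside $\Sigma^c$, $\Sigma^s$, $\Sigma^e$. The slope coefficient is $\omega_{\pm}^{\lambda}=\frac{ac}{ad\pm\sqrt{\Delta_2}}$ with $ac<0$, so the sign of $\omega_{\pm}^{\lambda}$ is opposite to that of $ad\pm\sqrt{\Delta_2}$. One has $ad+\sqrt{\Delta_2}>0$ always, whence $\omega_+^{\lambda}<0$; and the sign of $ad-\sqrt{\Delta_2}$ equals that of $(ad)^2-\Delta_2=adc\lambda$, i.e. the sign of $\lambda$, whence $\omega_-^{\lambda}<0$ for $\lambda>0$ and $\omega_-^{\lambda}>0$ for $\lambda<0$. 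To convert a slope sign into region membership, I would evaluate $X_3=b(y+x^2)$ and $Y_3=x$ along a line $x=\omega^{\lambda}y$ through the origin: for $y$ small one has $X_3\approx by$ and $Y_3=\omega^{\lambda}y$, so when $\omega^{\lambda}<0$ the two rays of the line lie in $\Sigma^{c-}$ and $\Sigma^{c+}$ (hence $S\subset\Sigma^c$), while when $\omega^{\lambda}>0$ they lie in $\Sigma^s$ and $\Sigma^e$ (hence $S\subset\Sigma^s\cup\Sigma^e$). This gives $S_{\pm}^{\lambda}\subset\Sigma^c$ for $\lambda>0$ and $S_+^{\lambda}\subset\Sigma^c$, $S_-^{\lambda}\subset\Sigma^e\cup\Sigma^s$ for $\lambda<0$, which are cases $(a)$ and $(b)$.

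The delicate point is this last conversion: I must ensure that near the origin the eigenline meets the tangency curve $S_X=\{y=-x^2\}$ only at the origin, so that the linear sign $X_3\approx by$ really determines on which side of $S_X$ each ray lies. Solving $x=\omega^{\lambda}y$ together with $y=-x^2$ gives, besides the origin, only $y=-1/(\omega^{\lambda})^2$, which is bounded away from the origin; this justifies the linearization and makes the remaining sign chasing routine.
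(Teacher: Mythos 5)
Your proposal is correct and is exactly the computation the paper's one-line proof gestures at: reading off the saddle structure and the location of $S_{\pm}^{\lambda}$ from the explicit eigenvalues \eqref{expressao-autovalor-primeiro-retorno} and eigenspaces \eqref{expressao-autoespaco-primeiro-retorno}, using the signs $a<0$, $b<0$, $c>0$, $d<0$ (hence $ad>0$) forced by $H_1$--$H_4$. Your observation that $\xi_+^{\lambda}\xi_-^{\lambda}=1$ via $(2ad-c\lambda)^2-4\Delta_2=(c\lambda)^2$, and your care in checking that the eigenlines meet $S_X$ only at the origin locally, supply details the paper omits; no gaps.
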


\begin{proof} Follows by the expressions \eqref{expressao-autovalor-primeiro-retorno} and \eqref{expressao-autoespaco-primeiro-retorno}, of the eigenvalues and the eigenspaces of $D\varphi_{Z_{\lambda}}(0)$, respectively.
\end{proof}

By Lemma \ref{lema dinamica-primeiro-retorno-Ldifzero}, in case $\lambda>0$, we get that given $p\in \Sigma^{c+}$ there exists $n_0\in \N$ such that $\varphi_{Z_{\lambda}}^{n_0}(p)\in \Sigma^{s}$. And the Lemma \ref{lema tangencia E2}, under the hypothesis $H_1, \dots, H_4$, provides that $Z_{\lambda}^{\Sigma}$ is asymptotic stable at origin. See Figure \ref{dobra-cuspide-primeiro-deslizante-Lpos}, when the orbits in red represent the iterated of $\varphi_{Z_{\lambda}}$ and the orbits in blue the dynamic of $Z_{\lambda}^{\Sigma}$.

\begin{figure}[ht]
\epsfysize=2cm \psfrag{a}{$\Sigma^{c-}$}\psfrag{b}{$\Sigma^{c+}$}\psfrag{c}{$\Sigma^{e}$}\psfrag{d}{$\Sigma^{s}$}\psfrag{S}{$\Sigma$} \psfrag{1}{Case $\lambda<0$}\psfrag{2}{Case $\lambda>0$}\psfrag{E1}{$E^{\lambda}_1$}\psfrag{E2}{$E^{\lambda}_2$}
\centerline{\epsfbox{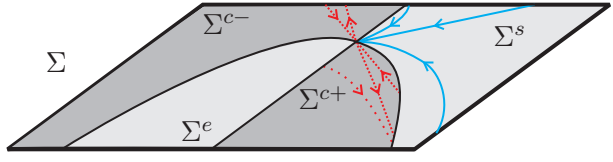}}\caption{In red, the dynamics of $\varphi_{Z_{\lambda}}$ and in blue the local dynamics of $Z_{\lambda}^{\Sigma}$, under the hypothesis $H_1, \dots, H_4$ with $\lambda>0$.}\label{dobra-cuspide-primeiro-deslizante-Lpos}
\end{figure}
In this case, we get that $Z_{\lambda}$ is asymptotic stable at origin, under the hypothesis $H_1, \dots, H_4$.

\vs

In case $\lambda<0$, the Lemma \ref{lema tangencia E2} provides that the trajectories of the sliding vector field $Z_{\lambda}^{\Sigma}$ have a transient behavior in $\Sigma^s$. In fact, in the present case, we shall prove that $Z_{\lambda}$ is not Lyapunov stable at the origin (a fold-fold singularity).

%

\begin{lemma}\label{lema imagem parabola lambda negativo}
Given $p_0=(x_0, -x^{2}_{0},0)$ (under the curve $y=-x^2$), with $x_0>0$, we get
$$
\varphi_{Z_{\lambda}}(x_0, -x^{2}_{0},0) =  \left( 2 x_0 + \frac{3 \lambda}{2 a}, -x^{2}_0 - \frac{3 \lambda(\lambda + 2 a x_0)}{2 a^2} + \frac{d (3 \lambda + 4 a x_0)}{ac},0 \right).
$$
\end{lemma}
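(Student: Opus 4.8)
The plan is to compute the two legs of the first return map $\varphi_{Z_{\lambda}}=\gamma_Y\circ\gamma_X$ explicitly, exactly as in the proofs of Lemmas \ref{lema imagem da parabola} and \ref{lema imagem do eixo y negativo}, but now carrying the $\lambda$-terms along. First I would integrate the flow of $X^{\lambda}_{a,b}$ starting at $P_0=(x_0,-x_0^2,0)$ to locate the point $P_1=\gamma_X(P_0)$ at which the trajectory returns to $\Sigma$, and then integrate the flow of $Y_{c,d}$ from $P_1$ to locate $P_2=\gamma_Y(P_1)=\varphi_{Z_{\lambda}}(P_0)$.

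For the first leg, the equations $\dot x=a$, $\dot y=\lambda$, $\dot z=b(y+x^2)$ give $x(t)=x_0+at$ and $y(t)=-x_0^2+\lambda t$, and integrating $z$ from $z(0)=0$ yields a cubic polynomial in $t$. The key simplification is that $P_0\in S_X$, i.e. $y_0+x_0^2=0$, so the linear term of $z(t)$ vanishes and $z(t)=bt^2\bigl(\tfrac{\lambda+2ax_0}{2}+\tfrac{a^2}{3}t\bigr)$. Thus $t=0$ is a double root (the tangential contact at $P_0$), and the genuine return occurs at the nonzero root $t_1=-\tfrac{3(\lambda+2ax_0)}{2a^2}$. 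Substituting $t_1$ into $x(t)$ and $y(t)$ produces $P_1=\bigl(-2x_0-\tfrac{3\lambda}{2a},\,-x_0^2-\tfrac{3\lambda(\lambda+2ax_0)}{2a^2},\,0\bigr)$.

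For the second leg, the flow of $Y_{c,d}$ is $\dot x=c$, $\dot y=d$, $\dot z=x$, so $x(t)=x_1+ct$, $y(t)=y_1+dt$ and $z(t)=x_1t+\tfrac{c}{2}t^2$. Again discarding the root $t=0$, the return time is $t_2=-\tfrac{2x_1}{c}$, whence $x_2=x_1+ct_2=-x_1=2x_0+\tfrac{3\lambda}{2a}$, and $y_2=y_1+dt_2=y_1+\tfrac{d(3\lambda+4ax_0)}{ac}$. Collecting the two coordinates gives precisely the claimed image of $\varphi_{Z_{\lambda}}(x_0,-x_0^2,0)$.

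The computation is elementary, so there is no conceptual obstacle; the only points deserving care are the correct selection of the nonzero return time on each leg (the trajectory of $X$ is tangent to $\Sigma$ at $P_0$, so $t=0$ must be discarded as the spurious double contact) and the bookkeeping of the $\lambda$-dependent terms when simplifying $y_2$. I would also note, to justify that this really is $\gamma_Y\circ\gamma_X$ and not merely a formal composition, that under $H_1$ and $H_3$ one has $t_1>0$ with $z>0$ along the first arc and $x_1<0$ (hence $t_2>0$ with $z<0$ along the second arc) for $x_0>0$ and $\lambda$ small, so both legs are bona fide forward returns to $\Sigma$.
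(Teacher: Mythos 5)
Your computation is correct and is exactly the ``straightforward'' direct calculation the paper intends (its proof is a single word), following the same two-leg integration pattern as Lemmas \ref{lema imagem da parabola} and \ref{lema imagem do eixo y negativo}; the return times $t_1=-\tfrac{3(\lambda+2ax_0)}{2a^2}$, $t_2=-\tfrac{2x_1}{c}$ and the resulting point match the stated formula (and agree with substituting $y=-x^2$ into \eqref{aplicacao-primeiro-retorno}). Your closing remark on the signs of $t_1$, $t_2$ and of $z$ along each arc is a welcome addition the paper omits, though note that $a<0$ comes from $H_2$ rather than from $H_1$ or $H_3$.
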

\begin{proof}
Straightforward.
\end{proof}

We denote $\varphi_{Z_{\lambda}}(p_0)=p^{\lambda}_1=(x^{\lambda}_1,y^{\lambda}_1,0)$, that can be situated at $\overline{\Sigma^{c+}}$ and in this case, by Lemma \ref{lema dinamica-primeiro-retorno-Ldifzero}, its distance to the origin increase when compared with $p_0$. Otherwise, $p^{\lambda}_1$ can be situated at $\Sigma^{s}$ and in this case the trajectory by this point slides to the parabola $y=-x^2$. The intersection point will be called $p^{\lambda}_2=(x^{\lambda}_2,y^{\lambda}_2,0)=(x^{\lambda}_2,-[x^{\lambda}_2]^2,0)$. As the origin is an attractor for $Z_{\lambda}^{\Sigma}$ we have to answer if the attractiveness $Z_{\lambda}^{\Sigma}$ is greater or less than the repulsiveness of the first return map $\varphi_{Z_{\lambda}}$.

Denote by $d(p,0)$ the euclidian distance between the point $p$ to the origin $0$.

\begin{lemma}\label{lema reta tangente e primeiro retorno}
Under the hypothesis $H_1, \dots, H_4, \lambda<0$ and with the previous notation,
$$
d(p^{\lambda}_2,0)>d(p_0,0).
$$
\end{lemma}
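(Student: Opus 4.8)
The plan is to reduce the euclidean inequality to a single comparison of abscissas and then to control the sliding return to the fold curve $S_X=\{y=-x^2\}$.

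First I would reduce the statement. Both $p_0=(x_0,-x_0^2,0)$ and $p^{\lambda}_2=(x^{\lambda}_2,-[x^{\lambda}_2]^2,0)$ lie on $S_X$ with positive abscissa, and for a point $(x,-x^2,0)$ one has $d((x,-x^2,0),0)^2=x^2+x^4$, which is strictly increasing for $x>0$. Hence $d(p^{\lambda}_2,0)>d(p_0,0)$ is equivalent to $x^{\lambda}_2>x_0$, and it is this inequality that I would establish. By Lemma \ref{lema imagem parabola lambda negativo} the abscissa of $p^{\lambda}_1=\varphi_{Z_{\lambda}}(p_0)$ equals $x^{\lambda}_1=2x_0+\frac{3\lambda}{2a}$; since $a<0$ and $\lambda<0$ the correction $\frac{3\lambda}{2a}$ is positive, so $x^{\lambda}_1>2x_0>x_0$. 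Thus the first return map strictly expands the abscissa, producing a gain $x^{\lambda}_1-x_0=x_0+\frac{3\lambda}{2a}>0$, and the whole problem is to show that the subsequent slide back to $S_X$ does not contract the abscissa below $x_0$.

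Next I would set up the sliding passage. Writing $u=y+x^2$ one has $\Sigma^s=\{x>0,\ u>0\}$ and $S_X=\{u=0\}$, and since $p^{\lambda}_1\in\Sigma^s$ we have $u_1>0$. Along the field \eqref{equacao campo normalizado com lambda} a direct computation gives $\dot u\big|_{u=0}=x(\lambda+2ax)<0$ for $x>0$, so every sliding orbit meets $S_X$ transversally from inside $\Sigma^s$; together with Lemma \ref{lema tangencia E2}(b), which for $\lambda<0$ places both eigendirections $E^{\lambda}_1,E^{\lambda}_2$ in $\Sigma^c$ so that no orbit of $Z^{\Sigma}_{\lambda}$ is trapped in $\Sigma^s$, this guarantees that the orbit through $p^{\lambda}_1$ reaches $S_X$ at a well defined point $p^{\lambda}_2$ with $x^{\lambda}_2>0$. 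Using the coordinate change $w=y-\frac{d}{c}x$, which partially linearizes the field into $\dot w=(\lambda-\frac{ad}{c})x$ and $\dot x=(a-bd)x-bc\,w-bc\,x^2$, I would read off the monotonicity of the abscissa: near $S_X$ the orbit satisfies $\frac{dx}{du}\to\frac{a}{2ax+\lambda}>0$, so $x$ decreases as the orbit descends from $u=u_1$ to $u=0$, confirming $x^{\lambda}_2<x^{\lambda}_1$ and reducing the claim to an upper bound on the loss $x^{\lambda}_1-x^{\lambda}_2$.

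The crux is then to prove $x^{\lambda}_1-x^{\lambda}_2<x_0+\frac{3\lambda}{2a}$. I would obtain this by integrating $\frac{dx}{du}=\frac{ax-bc\,u}{\lambda x+2ax^2-b(d+2cx)u}$ from $u_1$ down to $0$, that is, by expressing $p^{\lambda}_2$ through the flow of \eqref{equacao campo normalizado com lambda} started at $p^{\lambda}_1$, and comparing the resulting $x^{\lambda}_2$ with $x_0$ under $H_1,\dots,H_4$ and $\lambda<0$. The essential structural input is $H_4$ (equivalently $a+bd>0$) together with Lemma \ref{lema tangencia E2}: it is exactly the passage from $\lambda=0$, where $\Sigma^s$ is invariant and the slide captures the orbit, to $\lambda<0$, where $\Sigma^s$ is no longer invariant and the slide is merely transient, that keeps the abscissa loss strictly below the gain $x_0+\frac{3\lambda}{2a}$ furnished by $\varphi_{Z_{\lambda}}$. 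The hard part will be precisely this quantitative estimate: because $Z^{\Sigma}_{\lambda}$ is genuinely quadratic (the terms $bc\,x^2$ and $bc\,xu$ do not vanish), the return map to $S_X$ is not that of a linear node, so one must either integrate the flow exactly in the $(x,w)$ coordinates or derive a sufficiently sharp differential inequality for $x^{\lambda}_1-x^{\lambda}_2$ and then verify the resulting algebraic inequality for all admissible parameters; every other step is elementary.
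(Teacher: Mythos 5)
Your reduction of the lemma to the abscissa comparison $x^{\lambda}_2>x_0$ is correct (both points lie on $y=-x^2$ with positive abscissa, and $x\mapsto x^2+x^4$ is increasing there), your computation $x^{\lambda}_1=2x_0+\tfrac{3\lambda}{2a}>2x_0$ is right, and your sign analysis of $\dot u$ and $dx/du$ near $S_X$ correctly shows that the slide is transversal to $S_X$ and decreases the abscissa. But the proposal then stops exactly where the proof has to begin: the inequality $x^{\lambda}_1-x^{\lambda}_2<x_0+\tfrac{3\lambda}{2a}$ is announced as ``the crux'' and deferred to an unspecified exact integration or differential inequality that is never derived or verified. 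As written, this is a plan rather than a proof; nothing in the text establishes that the contraction along the sliding segment cannot eat up the gain produced by $\varphi_{Z_{\lambda}}$, and that is precisely the content of the lemma.

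The paper avoids this quantitative integration altogether by a soft geometric barrier argument: it takes the straight line $r$ tangent at $p_0$ to the sliding orbit of $Z^{\Sigma}_{\lambda}$, which splits $\Sigma^s$ into two regions $V^{\pm}$, intersects $r$ with the vertical line through $p^{\lambda}_1$ at a point $p^{\lambda}_3$, and checks by direct comparison of ordinates that $y^{\lambda}_1<y^{\lambda}_3$, i.e.\ that $p^{\lambda}_1$ (hence its forward slide $p^{\lambda}_2$) lies in $V^-$, the region whose exit points on the parabola are farther from the origin than $p_0$. If you want to salvage your route, you must actually produce the estimate you postpone; a cleaner fix is to replace the exact integration by a comparison with the tangent line (or with the sliding orbit through $p_0$ itself, using convexity of its trajectory), which is essentially what the paper does and turns the hard quantitative step into a one-line sign check of $y^{\lambda}_3-y^{\lambda}_1$ under $H_1,\dots,H_4$ and $\lambda<0$.
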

\begin{proof}
By \eqref{equacao campo normalizado com lambda} we obtain a vectorial equation of the straight line
\[
r=\{(x(\alpha),y(\alpha),0)| x(\alpha)=x_0 + \alpha a x_0, y(\alpha)=-x_0^2+\alpha \lambda x_0, \mbox{ with }\alpha \in \R\},
\]tangent to the trajectory of $Z_{\lambda}^{\Sigma}$ by $p_0=(x_0, -x^{2}_{0},0)$.

Note that $r$ splits $\Sigma^s$ in two regions, denoted by $V^+$ and $V^-$, see Figure \ref{dobra-cuspide-primeiro-deslizante-Lneg}. Consider the vertical straight line $s: p = p^{\lambda}_1 + \beta (0,1,0)$, with $\beta \in \R$, see Figure \ref{dobra-cuspide-primeiro-deslizante-Lneg}. Some calculus show that $r \cap s = p_3^{\lambda}$, where $p_3^{\lambda}= (x_3^{\lambda},y_3^{\lambda},0)=\Big(x^{\lambda}_1, -x^{2}_0 + \frac{\lambda}{a}\Big(  \frac{3 \lambda}{2 a}  + x_0 \Big),0 \Big)$. Comparing $y_3^{\lambda}$ with  $y^{\lambda}_1$ we get $y^{\lambda}_1 < y_3^{\lambda}$. Therefore $p^{\lambda}_1$, and consequently $p^{\lambda}_2$, are situated at the region $V_{-}$ described in Figure \ref{dobra-cuspide-primeiro-deslizante-Lneg}. So $d(p^{\lambda}_2,0)>d(p_0,0)$.
\end{proof}
\begin{figure}[ht]
\epsfysize=2.7cm \psfrag{a}{$\Sigma^{c-}$}\psfrag{b}{$\Sigma^{c+}$}\psfrag{c}{$\Sigma^{e}$}\psfrag{d}{$\Sigma^{s}$}\psfrag{S}{$\Sigma$} \psfrag{1}{$(a)$}\psfrag{2}{$(b)$}\psfrag{P}{$p_0$}\psfrag{P1}{$p_1^{\lambda}$}\psfrag{P2}{$p_2^{\lambda}$}
\psfrag{P3}{$p_3^{\lambda}$}\psfrag{R}{$r$}\psfrag{S}{$s$}\psfrag{Vp}{$V^+$}\psfrag{Vn}{$V^-$}
\centerline{\epsfbox{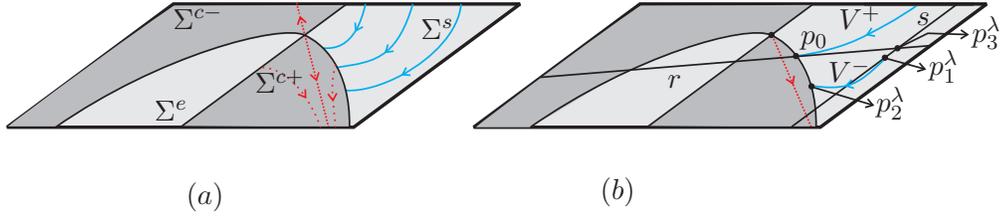}}\caption{In $(a)$ we have the local dynamics of $\varphi_{Z_{\lambda}}$ (in red) and $Z_{\lambda}^{\Sigma}$ (in blue). In $(b)$ is presented the straight lines $r$ and $s$, the points $p_0,p_1^{\lambda}, p_2^{\lambda}$ and $p_3^{\lambda}$ and the regions $V^+$ and $V^-$.}\label{dobra-cuspide-primeiro-deslizante-Lneg}
\end{figure}

\begin{lemma}\label{lema primeiro retorno lambda negativo}
$Z_{\lambda}$ is not Lyapunov stable at origin for $\lambda<0$.
\end{lemma}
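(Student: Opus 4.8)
The plan is to contradict Lyapunov stability directly: I fix a small ball $B_{\varepsilon}$ centered at the origin and show that every neighborhood of the origin contains a point whose forward $Z_{\lambda}$-trajectory eventually leaves $B_{\varepsilon}$. The natural probe points are those on the half-parabola $\Pi = S_X\cap\{x>0\} = \{(x,-x^2,0)\in\Sigma \,:\, x>0\}$, because Lemma \ref{lema reta tangente e primeiro retorno} already controls precisely how such a point moves under one return-and-slide cycle of the flow.

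First I would make precise the cycle map $P\colon\Pi\to\Pi$ induced by the forward flow near the origin. Starting from $p_0=(x_0,-x_0^2,0)\in\Pi$ with $x_0>0$ small, apply $\varphi_{Z_{\lambda}}$ to obtain $p_1^{\lambda}$. If $p_1^{\lambda}\in\Sigma^s$ (the situation of Lemma \ref{lema reta tangente e primeiro retorno}), the forward orbit slides along $Z_{\lambda}^{\Sigma}$, which is transient in $\Sigma^s$ for $\lambda<0$ by the remark following Lemma \ref{lema tangencia E2}, and meets $\Pi\subset\partial\Sigma^s$ again at $p_2^{\lambda}$; set $P(p_0)=p_2^{\lambda}$. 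If instead $p_1^{\lambda}\in\overline{\Sigma^{c+}}$, then by Lemma \ref{lema dinamica-primeiro-retorno-Ldifzero} the origin is a hyperbolic saddle of $\varphi_{Z_{\lambda}}$ with expansive direction $S_+^{\lambda}$, so iterating $\varphi_{Z_{\lambda}}$ increases the distance to the origin and eventually drives the orbit into $\Sigma^s$, after which it slides back to $\Pi$; again denote the return point by $P(p_0)$. In both cases Lemmas \ref{lema dinamica-primeiro-retorno-Ldifzero} and \ref{lema reta tangente e primeiro retorno} yield the strict expansion $d(P(p_0),0)>d(p_0,0)$, equivalently a strict increase of the $x$-coordinate along $\Pi$.

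Now fix $\varepsilon>0$ small enough that the preceding local analysis is valid on $B_{\varepsilon}$, and let $\delta>0$ be arbitrary. Choose $p_0\in\Pi$ with $\|p_0\|<\delta$ and set $q_n=P^n(p_0)\in\Pi$. The distances $d(q_n,0)$ form a strictly increasing sequence, and I claim the orbit must leave $B_{\varepsilon}$. If not, every $q_n$ stays in $B_{\varepsilon}$ and, writing $q_n=(x_n,-x_n^2,0)$, the coordinates $x_n>0$ increase and remain bounded, so $x_n\to x_*>0$ and $q_n\to q_*=(x_*,-x_*^2,0)\in\Pi$. Since $q_{n+1}=P(q_n)$ also tends to $q_*$, continuity of $P$ gives $P(q_*)=q_*$, whence $d(P(q_*),0)=d(q_*,0)$; but Lemmas \ref{lema dinamica-primeiro-retorno-Ldifzero} and \ref{lema reta tangente e primeiro retorno} assert the strict expansion $d(P(q_*),0)>d(q_*,0)$, a contradiction. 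Therefore some $q_n$, and hence the forward trajectory of $p_0$, escapes $B_{\varepsilon}$. As $\delta>0$ was arbitrary and $p_0$ can be chosen with $\|p_0\|<\delta$, the origin is not Lyapunov stable for $Z_{\lambda}$ when $\lambda<0$.

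The main obstacle is the bookkeeping that packages the forward flow into a single well-defined, continuous scalar return map $P$ on $\Pi$. One must check that Case I — successive iterations of $\varphi_{Z_{\lambda}}$ that remain in $\Sigma^{c+}$, governed by the hyperbolic saddle of Lemma \ref{lema dinamica-primeiro-retorno-Ldifzero} — always reaches $\Sigma^s$ in finitely many steps and then slides back to $\Pi$ in finite time, so that $P$ is everywhere defined near the origin, and that $P$ varies continuously, so the ``monotone limit is a fixed point'' step is legitimate. If one prefers to avoid the continuity and convergence argument altogether, the hyperbolic eigenvalue $\xi_+^{\lambda}$ with $|\xi_+^{\lambda}|>1$ can instead be exploited to extract a uniform factor $\kappa>1$ with $d(P(p),0)\ge\kappa\,d(p,0)$ for $p\in\Pi$ near the origin, so that $d(q_n,0)\ge\kappa^{n}d(q_0,0)$ and escape from $B_{\varepsilon}$ follows at once.
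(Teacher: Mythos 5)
Your argument follows the paper's own proof essentially step for step: reduce to the combined return-and-slide cycle on the half-parabola $S_X\cap\{x>0\}=\partial\Sigma^s$, invoke Lemma~\ref{lema dinamica-primeiro-retorno-Ldifzero} for the expansion of iterates landing in $\Sigma^{c+}$ and Lemma~\ref{lema reta tangente e primeiro retorno} for the strict increase of the distance to the origin after one cycle, and conclude that the origin is not Lyapunov stable. The only difference is that you explicitly bridge the gap between ``the distance increases strictly at each cycle'' and ``the orbit actually leaves a fixed ball'' (via the monotone-limit-is-a-fixed-point argument, or alternatively a uniform expansion factor $\kappa>1$), a step the paper's proof leaves implicit.
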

\begin{proof}
By Lemma \ref{lema tangencia E2} we get that $Z_{\lambda}^{\Sigma}$ has a transient behavior, i.e., $\Sigma^{c+}$ is a attractor set for $Z_{\lambda}^{\Sigma}$ and by Lemma \ref{lema dinamica-primeiro-retorno-Ldifzero} we conclude that all points in $\Sigma^{c+}$ converge to $\overline{\Sigma^s}$. So, to analyze the stability of $Z_{\lambda}$ at origin it is sufficient study the iterates of $Z_{\lambda}$ at boundary of $\Sigma^s$. By Lemma \ref{lema reta tangente e primeiro retorno} we obtain that the distance between the origin and a point in $\overline{\Sigma^s}$ increases along the time. Therefore, we conclude that $Z_{\lambda}$ is not Lyapunov stable at origin for this case.
\end{proof}

\begin{remark}
As consequence of Lemmas \ref{lema tangencia E2} and Lemma \ref{lema dinamica-primeiro-retorno-Ldifzero} we get that $Z_0$ has codimension at least two because the eigenspaces of the normalized sliding vector field and the first return map are tangent to $S_X$.

\end{remark}

\section{Proof of main results} \label{main results}

\subsection{Case $\lambda = 0$}

When $\lambda=0$, by Proposition \ref{proposicao a trajetoria cai no sliding}, the trajectories of all points in $\R^3$ intersect $\overline{\Sigma^s}$. By hypotheses $H_2$ and $H_4$, the omega limit set of all trajectories in $\overline{\Sigma^s}$ is the origin. So, $Z_0$ is asymptotically stable.

\subsection{Case $\lambda > 0$}

When $\lambda>0$, by item (a) of Lemma \ref{lema dinamica-primeiro-retorno-Ldifzero} the trajectories of all points in $\R^3$ intersect $\overline{\Sigma^s}$. Since the origin is a hyperbolic attractor for $\lambda=0$, the same holds when $\lambda \neq 0$, sufficiently small. By Lemma \ref{lema tangencia E2} we get $E^{\lambda}_1\subset \Sigma^c$ and
 $E^{\lambda}_2 \subset \Sigma^s$, for $x>0$. So, $Z_{\lambda}$ is asymptotically stable.

\subsection{Case $\lambda < 0$}

When $\lambda<0$, the result is an immediate consequence of Lemma \ref{lema primeiro retorno lambda negativo}.

\vspace{1cm}

\noindent {\textbf{Acknowledgments.}   The first author is
partially supported by a FAPESP-BRAZIL grant
2012/00481-6. This work is partially realized at UFG as a part of project numbers 35796 and
35797.}

\end{document}